\newcommand{\Supp}{\operatorname{Supp}}
\newcommand{\NN}{\mathcal{N}}
\newcommand{\gotG}{\mathfrak{g}}
\newcommand{\gotU}{\mathfrak{u}}
\newcommand{\proofend}{\hfill$\Box$\smallskip}
\providecommand{\cS}{\mathcal{S}}
\newcommand{\Sc}{\cS}
\newtheorem{thm}{Theorem}[section]
\newtheorem{thm*}{Theorem}
\newtheorem{lem}[thm]{Lemma}
\newtheorem{lem*}[thm*]{Lemma}
\newtheorem{prop}[thm]{Proposition}
\newtheorem{cor}[thm]{Corollary}
\newtheorem{prop*}[thm*]{Proposition}
\newtheorem{cor*}[thm*]{Corollary}
\newtheorem{defn}[thm]{Definition}
\newtheorem{defn*}[thm*]{Definition}
\newtheorem{thm-defn}[thm]{Theorem-Definition}
\newtheorem{notn}[thm]{Notation}
\providecommand{\cO}{\mathcal{O}}
\providecommand{\g}{\mathfrak{g}}
\providecommand{\al}{\alpha}
\providecommand{\fh}{\mathfrak{h}}
\providecommand{\fg}{\mathfrak{g}}
\providecommand{\ft}{\mathfrak{t}}
\providecommand{\fv}{\mathfrak{v}}
\providecommand{\cF}{\mathcal{F}}
\providecommand{\cG}{\mathcal{G}}
\providecommand{\cN}{\mathcal{N}}
\providecommand{\cS}{\mathcal{S}}
 \newtheorem{introtheorem}{Theorem}
 \crefname{introtheorem}{theorem}{theorems}
 \Crefname{introtheorem}{Theorem}{Theorems}
  \newtheorem{introthm}[introtheorem]{Theorem}
   \crefname{introthm}{theorem}{theorems}
 \Crefname{introthm}{Theorem}{Theorems}
  \crefname{introcorollary}{corollary}{corollaries}
 \Crefname{introcorollary}{Corollary}{Corollaries}
 \newtheorem{introcor}[introtheorem]{Corollary}
   \crefname{introcor}{corollary}{corollaries}
 \Crefname{introcor}{Corollary}{Corollaries}
   \crefname{introconjecture}{conjectures}{conjectures}
 \Crefname{introconjecture}{Conjecture}{Conjectures}
    \crefname{introconj}{conjectures}{conjectures}
 \Crefname{introconj}{Conjecture}{Conjectures}
     \crefname{introlem}{lemma}{lemmas}
 \Crefname{introlem}{Lemma}{Lemmas}
 \crefname{introremark}{remark}{remarks}
 \Crefname{introremark}{Remark}{Remarks}
  \crefname{introrem}{remark}{remarks}
 \Crefname{introrem}{Remark}{Remarks}
   \crefname{introprop}{Proposition}{Propositions}
 \Crefname{introprop}{Proposition}{Propositions}
   \crefname{introdefn}{definition}{definitions}
 \Crefname{introdefn}{Definition}{Definitions}
   \crefname{intronotn}{notation}{notations}
 \Crefname{intronotn}{Notation}{Notations}
   \crefname{introtask}{task}{tasks}
 \Crefname{introtask}{Task}{Tasks}
  \crefname{introprob}{problem}{problems}
 \Crefname{introprob}{Problem}{Problems}
   \crefname{introquestion}{question}{questions}
 \Crefname{introquestion}{Question}{Questions}
 \crefname{theorem}{theorem}{theorems}
 \Crefname{theorem}{Theorem}{Theorems}
  \crefname{thm}{theorem}{theorems}
 \Crefname{thm}{Theorem}{Theorems}
  \crefname{corollary}{Corollary}{Corollaries}
 \Crefname{corollary}{Corollary}{Corollaries}
   \crefname{cor}{Corollary}{Corollaries}
 \Crefname{cor}{Corollary}{Corollaries}
   \crefname{conjecture}{conjectures}{conjectures}
 \Crefname{conjecture}{Conjecture}{Conjectures}
    \crefname{conj}{conjectures}{conjectures}
 \Crefname{conj}{Conjecture}{Conjectures}
     \crefname{lem}{lemma}{lemmas}
 \Crefname{lem}{Lemma}{Lemmas}
      \crefname{lemma}{Lemma}{Lemmas}
 \Crefname{lemma}{Lemma}{Lemmas}
 \crefname{remark}{remark}{remarks}
 \Crefname{remark}{Remark}{Remarks}
  \crefname{rem}{remark}{remarks}
 \Crefname{rem}{Remark}{Remarks}
   \crefname{rem}{remark}{remarks}
 \Crefname{rem}{Remark}{Remarks}
   \crefname{proposition}{Proposition}{Proposition}
 \Crefname{proposition}{Proposition}{Proposition}
    \crefname{prop}{Proposition}{Propositions}
 \Crefname{prop}{Proposition}{Propositions}
   \crefname{defn}{definition}{definitions}
 \Crefname{defn}{Definition}{Definitions}
   \crefname{notn}{notation}{notations}
 \Crefname{notn}{Notation}{Notations}
   \crefname{task}{task}{tasks}
 \Crefname{task}{Task}{Tasks}
  \crefname{prob}{problem}{problems}
 \Crefname{prob}{Problem}{Problems}
   \crefname{question}{question}{questions}
 \Crefname{question}{Question}{Questions}
\theoremstyle{remark}
\newtheorem{rem}[thm]{Remark}
\newcommand{\Dima}[1]{{{#1}}}
\newcommand{\Rami}[1]{{{#1}}}
\newcommand{\DimaA}[1]{{{#1}}}
\newcommand{\DimaB}[1]{{{#1}}}
\newcommand{\DimaC}[1]{{{#1}}}
\newcommand{\RamiA}[1]{{{#1}}}
\begin{document}

\author{Avraham Aizenbud}\thanks{A.A. partially supported by NSF grant DMS-1100943 and ISF grant 687/13}
\address{Avraham Aizenbud,
Faculty of Mathematics and Computer Science, Weizmann
Institute of Science, POB 26, Rehovot 76100, Israel }
\email{aizenr@gmail.com}
\urladdr{http://math.mit.edu/~aizenr/}
\author{Dmitry Gourevitch}\thanks{D.G. partially supported by ERC grant 291612, and a Minerva foundation grant.}
\address{Dmitry Gourevitch, Faculty of Mathematics and Computer Science, Weizmann
Institute of Science, POB 26, Rehovot 76100, Israel }
\email{dimagur@weizmann.ac.il}
\urladdr{http://www.wisdom.weizmann.ac.il/~dimagur}
\author{Alexander Kemarsky}\thanks{A.K. partially supported by ISF grant No. 1394/12}
\address{Alexander Kemarsky, Faculty of Mathematics , Technion - Israel
Institute of Technology, Haifa 32000, Israel }
\email{alexkem@tx.technion.ac.il}
\keywords{Wave-front set, spherical character} 
\subjclass[2010]{20G05, 20G25, 22E35, 46F99}
\date{\today}
\title[Vanishing of certain equivariant distributions]{Vanishing of certain equivariant distributions on $p$-adic spherical spaces, and non-vanishing of spherical Bessel functions}
\begin{abstract}
We prove vanishing of distribution on p-adic spherical spaces that are equivariant with respect to a generic character of the nilradical of a Borel subgroup and satisfy a certain condition on the wave-front set. We deduce from this non-vanishing of spherical Bessel functions for Galois symmetric pairs.
\end{abstract}

\maketitle
\section{Introduction}
{Let $\mathbf{G}$ be a  reductive group, quasi-split over a non-Archimedean local field $F$ of characteristic zero. Let $\mathbf{B}$ be a Borel
subgroup of  $\mathbf{G}$, and let $\mathbf{U}$ be the unipotent radical
of $\mathbf{B}$.
Let $\mathbf{H}$ be a closed subgroup of $\mathbf{G}$.
Let $G,B,U,H$ denote the $F$-points of $\mathbf{G},\mathbf{B},\mathbf{U},\mathbf{H}$ respectively.
Suppose that $\mathbf{H}$ is an $F$-spherical subgroup of $\mathbf{G}$,
i.e. that there are finitely many $B\times H$-double cosets in $G$.
Let $\fg,\fh$ be the Lie algebras of $G,H$ respectively.
%
Let $\psi$ be a non-degenerate character of $U$ and let $\chi$
be a \RamiA{(locally constant)} character of $H$. For $x \in G$ denote $H^x:=xHx^{-1}$ and denote by $\chi^x$ the character of $H^x$ defined by conjugation of $\chi$.
For a $B \times H$-double coset $\cO \subset G$ define
$$\cO_c:=\left \{x \in \cO \, |\, \left. \psi \right|_{H^{x} \cap U} =  \left. \chi^{x} \right|_{H^{x} \cap U} \right \}. $$
Let $$Z := \bigcup_{\cO \text{ s.t. }\cO\neq\cO_c } \cO.$$
Identify $T^*G$ with $G \times \fg^*$ and let $\cN_{\g^*}$ be the set of nilpotent elements in $\g^*$.

\RamiA{Consider the action of $U \times H$ on $G$ given by $(u,h)x=uxh^{-1}$. This gives rise to an action of $U \times H$ on the space $\cS(G)$ of Schwartz (i.e. locally constant compactly supported) functions on $G$  and the dual action on the space of  distributions $ \cS^*(G)$.}

In this paper we prove the following theorem.
\begin{introthm}[see \Cref{sec:PfMain}]\label{thm: main theorem}
Let $\xi \in \cS^*(G)^{(U\times H, \psi \times \chi)} $ be a\RamiA{n equivariant distribution}
on $G$\RamiA{, i.e. $(u,h)\xi=\psi(u)\chi(h)\xi$}. Suppose that the wave-front set (see \cref{subsec:WF})  $WF(\xi)$ lies in $G\times \NN_{\fg^*}$
and $\Supp(\xi) \subset Z$.
Then $\xi = 0$.
\end{introthm}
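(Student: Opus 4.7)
I proceed by induction on the finite poset of $B\times H$-double cosets $\cO \subset G$ under the closure order. Assume $\xi \neq 0$. Since $\Supp(\xi) \subset Z$ meets only finitely many double cosets, I pick a coset $\cO \subset \Supp(\xi)$ that is open in $\Supp(\xi)$; as $\cO \subset Z$, necessarily $\cO \neq \cO_c$. Replacing $G$ by an open neighborhood $V$ of $\cO$ disjoint from the remainder of $\Supp(\xi)$, I may assume $\xi$ is supported on the closed submanifold $V \cap \cO \subset V$. The problem reduces to: a $(U\times H,\psi\times\chi)$-equivariant $\xi \in \cS^*(V)$ with $WF(\xi) \subset V \times \NN_{\g^*}$, supported on a single closed orbit $V \cap \cO$ with $\cO \neq \cO_c$, must vanish.

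\smallskip

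For this single-orbit step I would combine two standard tools. First, the $(U\times H)$-equivariant filtration on the space of distributions supported on $V \cap \cO$ by transverse order, whose $k$-th associated graded is isomorphic to $\cS^*(V\cap\cO, \Sym^k N^*_{\cO/V})$ for the conormal bundle $N^*_{\cO/V}$; second, Frobenius descent along $\cO \cong (B\times H)/\operatorname{Stab}_{B\times H}(x_0)$, expressing a $(U\times H, \psi\times\chi)$-equivariant section of $\Sym^k N^*_{\cO/V}$ over $\cO$ as a $\operatorname{Stab}_{U\times H}(x_0)$-invariant vector in $\Sym^k N^*_{\cO/V, x_0}$ transforming by the restriction of $\psi\times\chi$. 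For $k = 0$ the Frobenius existence condition is precisely $x_0 \in \cO_c$ (up to the paper's sign convention). For $k \geq 1$, a nonzero graded piece contributes covectors to $WF(\xi)$ in the conormal bundle $N^*_{\cO/V} \subset V \times \g^*$ (left trivialized); the hypothesis $WF(\xi) \subset V \times \NN_{\g^*}$ forces these to be nilpotent. Picking $x_0 \in \cO \setminus \cO_c$, the $k = 0$ piece vanishes, and a direct analysis of $\operatorname{Stab}_{U\times H}(x_0)$ acting on $\Sym^k N^*_{\cO/V, x_0}$ (where the conormal is the annihilator of $\operatorname{Ad}(x_0^{-1})\fb + \fh$ in $\g^*$), together with the nilpotency constraint on the admissible conormal covectors, rules out the higher graded pieces as well. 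Propagating the vanishing along $\cO$ using the transitive $B\times H$-action yields $\xi|_V = 0$, a contradiction.

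\smallskip

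\textbf{Main obstacle.} The heart of the argument is matching the wave-front hypothesis against the higher graded pieces of the conormal filtration. Concretely, one must understand the representation of $\operatorname{Stab}_{U\times H}(x_0)$ on $\Sym^k N^*_{\cO/V, x_0}$ and verify that the nilpotency constraint from $WF(\xi) \subset V \times \NN_{\g^*}$, combined with equivariance under the twisted stabilizer character, admits no nonzero solution once the base-case Frobenius condition $x_0 \in \cO_c$ fails. A conceptually cleaner route is to linearize via an exponential chart at $x_0 \in \cO \setminus \cO_c$, take a partial Fourier transform in the directions transverse to $\cO$, and exploit the fact that a distribution with nilpotent wave-front transforms to one supported on $\NN_{\g^*}$; but either approach demands careful bookkeeping in the $p$-adic setting.
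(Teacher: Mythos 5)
Your opening move (decompose $G$ by $B\times H$-double cosets and restrict to a stratum of the support that is open in the support) matches the paper, but your single-orbit step has a genuine gap, and that gap is precisely the heart of the theorem. You propose Frobenius descent to express a $(U\times H,\psi\times\chi)$-equivariant section over $\cO$ as a vector at a single point $x_0$, and you choose $x_0\in\cO\setminus\cO_c$ to kill the $k=0$ piece. But $\xi$ is equivariant only under $U\times H$, which does \emph{not} act transitively on the $B\times H$-coset $\cO$, so descent to a single point is unavailable. The locus $\cO_c\subset\cO$ where the stabilizer-character condition \emph{does} hold is $U\times H$-invariant and in general nonempty (the hypothesis $\cO\neq\cO_c$ only says it is a proper subset), so the $k=0$ graded piece can perfectly well be a nonzero equivariant distribution supported on $\cO_c$; the Bernstein--Gelfand--Kazhdan--Zelevinsky localization (Theorem \ref{thm:BGKZ}) only kills the part living over $\cO_s=\cO\setminus\cO_c$. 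The paper's proof is organized around exactly this residual case: it shows (Lemma \ref{lem:char}, Lemma \ref{lem:AlgVar}, Corollary \ref{cor:strat}) that $\cO_c$ is the set of $F$-points of a proper closed algebraic subvariety of $\cO$, hence of strictly smaller dimension, and then kills distributions supported on its strata by a conormal-space comparison.

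Second, your treatment of the transverse orders $k\geq 1$ is not an argument but an acknowledged obstacle, and the mechanism you suggest --- that nilpotency of $WF(\xi)$ excludes covectors lying in the conormal bundle of $\cO$ --- cannot work as stated: the conormal space $CN^{G}_{BxH,x}=(\ft+\fu+\operatorname{ad}(x)\fh)^{\perp}$ is contained in $\fb^{\perp}$, which consists entirely of nilpotent covectors, so the nilpotency hypothesis does not exclude conormal directions to the double coset. What the paper actually proves (Lemma \ref{lem:WF}) is the reverse inclusion $WF_x(\xi)\subset CN^{G}_{BxH,x}$: equivariance gives $WF_x(\xi)\subset\fu^{\perp}\cap(\operatorname{ad}(x)\fh)^{\perp}$, and nilpotency upgrades $\fu^{\perp}$ to $(\ft+\fu)^{\perp}$. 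Combined with Theorem \ref{thm: wfs invariance to shifts} --- the invariance of $WF_y(\xi)$ under shifts by $CN^{X}_{Y,y}$ when $\operatorname{Supp}(\xi)\subset Y$, packaged as Corollary \ref{cor:WCI} --- this kills any distribution supported on a stratum $S_k\subset\cO_c$ of dimension strictly less than $\dim\cO$, since then $CN^{G}_{S_k,x}\supsetneq CN^{G}_{\cO,x}\supset WF_x(\xi)$, so the full conormal of the support cannot sit inside the wave front set. This device replaces your entire filtration-plus-partial-Fourier analysis. To complete your proposal you would need both to repair the descent step as above and to either import these two wave-front results or actually carry out the $\Sym^k$ bookkeeping you defer.
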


In the case when $\mathbf{H}$ is a subgroup of Galois type we can prove a stronger statement. By a subgroup of Galois type we mean a  subgroup $\mathbf{H}\subset \mathbf{G}$
such that $$(\mathbf{G}\times_{\mathrm{Spec} F}\mathrm{Spec} E,\mathbf{H}\times_{\mathrm{Spec} F}\mathrm{Spec} E)  \simeq (\mathbf{H}\times_{\mathrm{Spec} F} \mathbf{H}\times_{\mathrm{Spec} F}\mathrm{Spec} E,\Delta \mathbf{H}\times_{\mathrm{Spec} F}\mathrm{Spec} E) $$ for some field extension $E$ of $F$, where $\Delta \mathbf{H}$ is the diagonal copy of $\mathbf{H}$ in $\mathbf{H} \times_{\mathrm{Spec} F} \mathbf{H}$.

\begin{introcor}[see Section \ref{section: proof of cors}]\label{cor: galois case}
Let $\mathbf{H}\subset \mathbf{G}$ be a subgroup of Galois type, and let $\chi$ be a character of $H$.
Let $S $ be the union of all non-open $B\times H$-double cosets in $G$.
Let $\xi \in \cS^*(G)^{(U\times H,\psi\times \chi)} $. Suppose that $WF(\xi) \subset G \times \NN_{\fg^*}$
and $\Supp(\xi) \subset S$. Then $\xi = 0$.
\end{introcor}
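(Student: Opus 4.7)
The plan is to deduce \Cref{cor: galois case} from \Cref{thm: main theorem} by establishing the inclusion $S \subset Z$. Once this is proved, the hypotheses $\Supp(\xi) \subset S \subset Z$ and $WF(\xi) \subset G \times \NN_{\fg^*}$ bring $\xi$ into the scope of \Cref{thm: main theorem}, forcing $\xi = 0$. Since both $S$ and $Z$ are unions of $B \times H$-double cosets, it suffices to show that every non-open coset $\cO$ satisfies $\cO \neq \cO_c$, i.e., admits some $x \in \cO$ with $\psi|_{H^x \cap U} \neq \chi^x|_{H^x \cap U}$.

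My approach is to verify, for an appropriately chosen representative $x$ in such an $\cO$, that (a) $\chi^x|_{H^x \cap U}$ is trivial while (b) $\psi|_{H^x \cap U}$ is non-trivial; the inequality then follows immediately. For (a), note that for $u \in H^x \cap U$, the conjugate $x^{-1}ux$ is a unipotent element of $H$, and by the Galois type hypothesis $\mathbf{H}$ is connected reductive over $F$ (since $\mathbf{H}_E$ is such and reductivity descends). A standard structural fact --- proved by reducing to root $SL_2$-subgroups, whose $F$-points are perfect --- says that any locally constant character of $\mathbf{H}(F)$ vanishes on unipotent elements, yielding $\chi^x(u) = \chi(x^{-1}ux) = 1$.

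For (b), I would exploit the Galois type structure: after base change to $E$, the isomorphism $(\mathbf{G}_E, \mathbf{H}_E) \simeq (\mathbf{H}_E \times \mathbf{H}_E, \Delta \mathbf{H}_E)$ identifies the $B \times H$-orbits on $G$ with Bruhat cells of $\mathbf{H}_E$ indexed by the Weyl group $W$, the open coset corresponding to the longest element $w_0$. For a non-open coset with Weyl index $w \neq w_0$, there exists a simple root $\alpha$ with $w(\alpha) > 0$ (standard Weyl-combinatorial consequence of $l(w) < l(w_0)$), and for a suitable representative $x$ the corresponding root subgroup sits inside $H^x \cap U$. Non-degeneracy of $\psi$ then forces $\psi$ to be non-trivial on this root subgroup, hence on $H^x \cap U$.

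The principal technical obstacle is the Galois descent in step (b) --- producing a representative $x$ honestly in $\cO \subset G = \mathbf{G}(F)$, rather than merely in $\mathbf{G}(E)$, such that the root-subgroup argument, naturally phrased over $E$, descends to yield non-triviality of $\psi$ on a subgroup of $H^x \cap U \subset U = \mathbf{U}(F)$. Step (a) is by contrast a routine invocation of reductive group structure theory.
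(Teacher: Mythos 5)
Your overall reduction is exactly the paper's: deduce the corollary from \Cref{thm: main theorem} by showing $S\subset Z$, and for each non-open double coset verify that $\chi^x$ is trivial on $H^x\cap U$ (since $H^x$ is reductive and a smooth character of a reductive group kills unipotent elements) while $\psi$ is not. Step (a) is fine. The genuine gap is precisely the one you flag in step (b): you propose to find a representative $x\in\cO$ over $F$ such that a root subgroup, produced by Bruhat/Weyl combinatorics over $E$, lies in $H^x\cap U$ and meets $\psi$ non-trivially, and you leave the descent unresolved. As stated this step would not go through directly: the root subgroups live over $E$, their intersection with the $F$-points $U_x=\mathbf{U}_x(F)$ need not be visible as a root subgroup over $F$, and even knowing that $\mathbf{U}_x$ is not contained in $\mathbf{U}'$ does not by itself force the \emph{given} non-degenerate character $\psi$ to be non-trivial on $U_x$ --- the image of $U_x$ in $U/[U,U]$ could a priori land in the kernel of $\psi$.

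The paper closes this gap in two moves (\Cref{lem: stabilizer-group case}, \Cref{cor: orbit cond}, \Cref{cor: orbit cond Galois case}) that you should be able to reproduce. First, the only statement one extracts from the base change to $E$ is purely algebro-geometric and field-insensitive: if the orbit $\mathbf{B}x$ is not open, then $\mathbf{U}_x\not\subset\mathbf{U}'$; this is proved in the group case $(\mathbf{H}\times\mathbf{H},\Delta\mathbf{H})$ by your Weyl-group argument (if $w\mathbf{U}w^{-1}\cap\mathbf{U}\subset\mathbf{U}'$ then $w$ is the longest element). Second --- and this is the idea you are missing --- one does not try to make $\psi$ itself see a specific root subgroup. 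From $\mathbf{U}_x\not\subset\mathbf{U}'$ one only concludes that \emph{some} non-degenerate character $\varphi$ of $U$ satisfies $\varphi(U_x)\neq 1$; then the set of characters non-trivial on $U_x$ is Zariski open in the space of characters of $U/[U,U]$, hence dense in the $l$-space topology, hence meets the $B$-orbit of $\psi$ (which is open). Transporting by the corresponding element of $B$ replaces $x$ by another point $y$ of the \emph{same} double coset with $\psi(U_y)\neq 1$, which is all that is needed since $Z$ is a union of double cosets and $\cO_c=\cO$ would require the condition at every point of $\cO$. So the approach is the right one, but the proof is incomplete without this genericity argument (or some substitute for it).
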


%

Note that if $\chi$ is trivial, we can consider the distribution $\xi$ as a distribution on $G/H$. Considering $ \mathbf{\tilde G}:=\mathbf{G}\times \mathbf{G}$ and taking $\mathbf{H}$ to be the diagonal copy of $\mathbf{G}$ we obtain the following corollary for the group case.

\begin{introcor}[see Section \ref{section: proof of cors}]\label{cor: group case}
Let $\psi_1$ and $\psi_2$ be non-degenerate characters of $U$. Let $B \times B$ act on
$G$ by $(b_1,b_2)g := b_1gb^{-1}_2$. Let $S$ be the complement to the open $B \times B$-orbit
in $G$. For any $x\in G$, identify $T_xG$ \Dima{with} $\fg $ and $T_x^*G$ with $ \fg^*$.
Let $$\xi \in \cS^*(G)^{U\times U,\psi_1 \times \psi_2}$$ and suppose that
$WF(\xi) \subset S \times\ \cN_{\g^*}$.
Then $\xi = 0$.
\end{introcor}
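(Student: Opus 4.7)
The plan is to reduce this corollary to \Cref{cor: galois case} by recognizing that the pair $(\mathbf{G}\times\mathbf{G},\Delta\mathbf{G})$ is of Galois type: the definition is satisfied with $E=F$ and the auxiliary group there taken to be $\mathbf{G}$ itself, so the required isomorphism is the identity. Set $\tilde G:=G\times G$, $\tilde U:=U\times U$, $\tilde H:=\Delta G$, $\tilde\psi:=\psi_1\times\psi_2$, and let $\pi\colon\tilde G\to G$ be the submersion $(x,y)\mapsto xy^{-1}$, whose fibers are the orbits of the right $\tilde H$-action $(g,g)\cdot(x,y):=(xg^{-1},yg^{-1})$. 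Pull $\xi$ back to $\tilde\xi:=\pi^*\xi\in\cS^*(\tilde G)$; since $\pi$ is a submersion with connected fibers, $\pi^*$ is injective, so it suffices to prove $\tilde\xi=0$ by applying \Cref{cor: galois case} to $\tilde\xi$ on the Galois pair $(\tilde G,\tilde H)$ with trivial character on $\tilde H$.

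\textbf{Transferring the hypotheses.} By construction $\tilde\xi$ is $\tilde H$-invariant, and it is $(\tilde U,\tilde\psi)$-equivariant because $\pi$ intertwines the left $\tilde U$-action on $\tilde G$ with the two-sided action $(u_1,u_2)z=u_1zu_2^{-1}$ on $G$ appearing in the statement. For the wave-front set, the pullback formula under a submersion gives $WF(\tilde\xi)\subset d\pi^*(WF(\xi))$. It suffices to check at the identity, where $d\pi^*\colon\g^*\to\tilde\g^*=\g^*\oplus\g^*$ is the map $\eta\mapsto(\eta,-\eta)$; a nilpotent $\eta\in\cN_{\g^*}$ maps to $(\eta,-\eta)\in\cN_{\tilde\g^*}$ because the adjoint representation of $\tilde G$ on $\tilde\g$ splits as a direct sum. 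Left-right equivariance extends this to every point, yielding $WF(\tilde\xi)\subset\tilde G\times\cN_{\tilde\g^*}$. Moreover, $\pi$ induces a bijection between $\tilde B\times\tilde H$-double cosets in $\tilde G$ and $B\times B$-orbits in $G$, carrying the open double coset to the open orbit; hence $\pi^{-1}(S)$ coincides with the union of non-open $\tilde B\times\tilde H$-double cosets in $\tilde G$.

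\textbf{Main obstacle.} What remains in order to apply \Cref{cor: galois case} is its support hypothesis, equivalently $\Supp(\xi)\subset S$. Our wave-front hypothesis only guarantees that $\xi$ is smooth on the open orbit $\Omega:=G\setminus S$, whereas we need the stronger $\xi|_\Omega=0$. Since the two-sided $(U\times U)$-action on $\Omega$ is free with quotient isomorphic to $T$ via the Bruhat decomposition, smooth $(\tilde U,\tilde\psi)$-equivariant functions on $\Omega$ are parameterized by locally constant functions on $T$, so equivariance alone does not force vanishing. The crux of the argument is thus to leverage the nilpotent wave-front restriction on the codimension-one boundary strata of $S$ to rule out non-zero smooth equivariant contributions on $\Omega$, via a local microlocal analysis of the boundary behavior along a Bruhat slice: any non-zero such contribution would produce covectors with semisimple (non-nilpotent) components in the $T$-direction, contradicting $WF(\xi)\subset S\times\cN_{\g^*}$. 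Once $\xi|_\Omega=0$ is established, the support condition is in force, \Cref{cor: galois case} yields $\tilde\xi=0$, and injectivity of $\pi^*$ gives $\xi=0$.
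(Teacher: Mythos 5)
Your reduction to \Cref{cor: galois case} via $\tilde G=G\times G$, $\tilde H=\Delta G$ and the submersion $\tilde G\to\tilde G/\tilde H\cong G$ is exactly the paper's route, and the transfer of the equivariance, wave-front and double-coset data is fine. However, there is a genuine gap where you declare the ``main obstacle'': you claim the hypothesis only gives smoothness of $\xi$ on the open orbit and then propose, without carrying it out, a microlocal analysis along the boundary strata to force $\xi|_{\Omega}=0$. That proposed step is a nontrivial assertion about boundary asymptotics of smooth $(U\times U,\psi_1\times\psi_2)$-equivariant functions on the big Bruhat cell, and nothing in your write-up substantiates it; as written the proof is incomplete at the step you yourself identify as the crux.

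The point you missed is a matter of convention rather than substance. In this paper the wave front set contains the zero covector over every point of the support: the definition of ``smooth at $(x_0,w_0)$'' applied with $w_0=0$ forces $\rho\cdot\xi=0$, so $(x,0)\in WF(\xi)$ precisely when $x\in\Supp(\xi)$ (this is stated in the remark comparing with Heifetz's $WF_{F^\times}$, and is used explicitly in the proof of \Cref{cor:WCI}: ``let $x\in\Supp(\xi)$; then $(x,0)\in WF_x(\xi)$''). Hence the hypothesis $WF(\xi)\subset S\times\cN_{\g^*}$ already yields $\Supp(\xi)\subset S$, the support condition of \Cref{cor: galois case} is immediate for the pullback, and your ``obstacle'' disappears. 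With that observation your argument closes and coincides with the paper's proof; without it, the proof as submitted does not go through.
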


\subsection{Applications to non-vanishing of spherical Bessel functions}

Let $\pi$ be an admissible representation of $G$ (of finite length)\DimaB{, and $\tilde \pi$  be the smooth contragredient representation}.
Let $\mathbf{H}\subset \mathbf{G}$ be an algebraic spherical subgroup and let $\chi$ be a character of $H$.
Let $\phi\in (\pi^*)^{(U,\psi)}$ be a $(U,\psi)$-equivariant functional on $\pi$ and $v$ be an $(H,\chi)$-equivariant functional on $\tilde \pi$. \DimaB{For any function $f\in \Sc(G)$, we have $\pi^*(f) \phi \in \tilde \pi \subset \pi^*$.

This enables us to
define the \emph{spherical Bessel distribution} corresponding to $v$ and $\phi$ by
$$\xi_{v,\phi}(f):=\langle  v, \pi^*(f) \phi \rangle. $$}
%
%
By \cite[Theorem A]{AGS} we have  $WF(\xi_{v,\phi}) \subset G\times \cN_{\fg^*}$.

  The \emph{spherical Bessel function} is defined to be the restriction $j_{v,\phi}:=\xi_{v,\phi}|_{G - S}$, where $S $ is the union of all non-open $B\times H$-double cosets in $G$. 
One can easily deduce from \Rami{ \cite[Theorem A]{AGS} 
and \Cref{lem:WF}}
that
$j_{v,\phi}$ is a smooth function.
Theorem \ref{thm: main theorem} and Corollary \ref{cor: galois case} imply the following corollary.
\begin{introcor}\label{cor:Bessel}
Suppose that  $\pi$ is irreducible and
 $v,\phi$ \Dima{are non-zero}. Then
\begin{enumerate}[(i)]
\item \DimaB{For any open subset $U\subset G$ that includes $G \setminus Z$ we have $\xi_{v,\phi}|_{U} \neq 0$.}
\item \label{it:Gal} If $\mathbf{H}$ is a subgroup of Galois type then $j_{v,\phi} \neq 0$.
\end{enumerate}
\end{introcor}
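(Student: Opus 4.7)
The plan is to argue both parts by contradiction, reducing each to one of the vanishing results already proved in the paper. The crux is a preliminary non-vanishing statement: under the irreducibility of $\pi$ and the non-triviality of $v$ and $\phi$, the spherical Bessel distribution $\xi_{v,\phi}$ itself is non-zero. Once this is established, parts (i) and (ii) follow almost immediately.

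First I would verify that $\xi_{v,\phi}\neq 0$. Since $\phi\in(\pi^*)^{(U,\psi)}$ is non-zero and $\pi$ is smooth, an approximate identity argument shows that $\pi^*(f)\phi\neq 0$ for some $f\in\Sc(G)$. Therefore $V:=\pi^*(\Sc(G))\phi$ is a non-zero $G$-invariant subspace of the smooth contragredient $\tilde\pi$, and by irreducibility $V=\tilde\pi$. Since $v$ is a non-zero functional on $\tilde\pi$, there exists $f\in\Sc(G)$ with $\xi_{v,\phi}(f)=\langle v,\pi^*(f)\phi\rangle\neq 0$.

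For part (i), suppose to the contrary that $\xi_{v,\phi}|_U=0$ for some open $U\supset G\setminus Z$. Then $\Supp(\xi_{v,\phi})\subset G\setminus U\subset Z$. Since by construction $\xi_{v,\phi}\in\Sc^*(G)^{(U\times H,\psi\times\chi)}$ and $WF(\xi_{v,\phi})\subset G\times\cN_{\fg^*}$ by \cite[Theorem A]{AGS}, all hypotheses of Theorem \ref{thm: main theorem} are satisfied, forcing $\xi_{v,\phi}=0$ and contradicting the preceding step. Part (ii) is analogous: if $j_{v,\phi}=\xi_{v,\phi}|_{G\setminus S}=0$ then $\Supp(\xi_{v,\phi})\subset S$, and Corollary \ref{cor: galois case} now applies in place of Theorem \ref{thm: main theorem} to give $\xi_{v,\phi}=0$, once again a contradiction.

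The only substantive step is the non-vanishing of $\xi_{v,\phi}$; the remainder is purely a matter of assembling the equivariance, the wave-front bound, and the correct support constraint so as to invoke the earlier results. I expect no significant obstacles, since the heavy lifting has been absorbed into Theorem \ref{thm: main theorem} and Corollary \ref{cor: galois case}.
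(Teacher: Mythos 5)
Your proposal is correct and follows exactly the route the paper intends: the paper gives no separate proof of this corollary beyond the remark that it follows from Theorem \ref{thm: main theorem} and Corollary \ref{cor: galois case}, and your contradiction argument together with the standard irreducibility/approximate-identity argument for $\xi_{v,\phi}\neq 0$ is precisely the intended deduction.
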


For the group case  this corollary was proven in \cite[Appendix A]{LM}.
}


\subsection{Related results}

In \cite{AG}  a certain  Archimedean analog of Theorem \ref{thm: main theorem} is proven (see \cite[Theorem A]{AG}). This analog implies that the Archimedean analog  of Corollary \ref{cor:Bessel}\eqref{it:Gal} holds for any spherical pair $(G,H)$ (see \cite[Corollary B]{AG}).

Corollary \ref{cor: group case} together with  \cite[Theorem A]{AGS} can replace \cite[Theorem 3]{GK} in the proof of uniqueness of Whittaker models \cite[Theorem C]{GK}.

\Cref{thm: main theorem} can be used in order to study the dimensions of the spaces of $H$-invariant functionals on irreducible generic representations of $G$ (see \cite[\S 1.3]{AG} for more details). It can also be used in the study of analogs of Harish-Chandra's density theorem (see \cite[\S 1.7]{AGS} for more details).

\subsection{Acknowledgements}

We would like to thank \Rami{Moshe Baruch}, Shachar Carmeli, Guy Henniart, Friedrich Knop, Erez Lapid, Andrey Minchenko, \Rami{Eitan Sayag, Omer Offen,}  and Dmitry Timashev for fruitful discussions.

%
%

\section{Preliminaries}\label{sec:prel}

\subsection{Conventions}

\begin{itemize}
\item We fix $F,\mathbf{G},\mathbf{B},\mathbf{U},\mathbf{X}$ and $\psi$ as in the introduction.

\item All the algebraic groups and algebraic
varieties that we consider  are defined over $F$.
We will use capital bold letters, e.g. $\mathbf{G},\mathbf{X}$ to denote algebraic groups and varieties defined over $F$, and their non-bold versions to denote the $F$-points of these varieties, considered as $l$-spaces or $F$-analytic manifolds (in the sense of \cite{Ser}).

 \item When we use a capital Latin letter to denote an $F$-analytic group or an  algebraic group, we use the corresponding Gothic letter to denote its Lie algebra.


\item We denote by $G_x$ the stabilizer of $x$ and by $\fg_x$ its Lie algebra.
\DimaC{
\item For an $F$-analytic manifold $X$, a  submanifold $Y \subset X$ and a point $y\in Y$ we denote by $CN_Y^X\subset T^*X$ the conormal bundle to $Y$ in $X$, and by $CN_{Y,y}^X$ the conormal space at $y$ to $Y$ in $X$.
\item By a smooth measure on an $F$-analytic manifold we mean a measure which in a neighborhood of any point coincides (in some local coordinates centered at the origin) with some Haar measure on a closed ball centered at 0. A Schwartz measure is a compactly supported smooth measure.
\item The space of generalized functions $\cG(X)$ on an $F$-analytic manifold $X$ is defined to be the dual of the space of Schwartz measures. One can identify  $\cG(X)$ with $\Sc^*(X)$ by choosing a smooth measure with full support.
\item Let  $\phi:X\to Y$ be  a submersion of analytic manifolds. Note that the pushforward of a Schwartz measure with respect to $\phi$ is a Schwartz measure. By dualizing the pushforward map we define the pullback map $\phi^*:\cG(Y)\to \cG(X)$.
}
\end{itemize}

\subsection{Wave front set}\label{subsec:WF}

In this section we give an overview of the theory of the wave front set as developed  by D.~Heifetz \cite{Hef}, following L.~H\"ormander (see \cite[\S 8]{Hor}).
For simplicity we ignore here the difference between distributions and generalized functions.

\begin{defn}\label{def:wf}$ $
\begin{enumerate}
\item
Let $V$ be a finite-dimensional vector space over $F$.
Let $f \in C^{\infty}(V^*)$ and $w_0 \in V^*$. We  say that $f$ \emph{vanishes asymptotically in the direction of} $w_0$
if there exists $\rho \in \Sc(V^*)$ with $\rho(w_0) \neq 0$ such that the function $\phi \in C^\infty(V^* \times F)$ defined by $\phi(w,\lambda):=f(\lambda w) \cdot \rho(w)$ is compactly supported.

\item
Let $U \subset V$ be an open set and $\xi \in \Sc^*(U)$. Let $x_0 \in U$ and $w_0 \in V^*$.
We say that $\xi$ is \emph{smooth at} $(x_0,w_0)$ if there exists a compactly supported non-negative function $\rho \in \Sc(V)$ with $\rho(x_0)\neq 0$ such that the Fourier transform $\cF^*(\rho \cdot \xi)$ vanishes asymptotically in the direction of
$w_0$.
\item
The complement in $T^*U$
of the set of smooth pairs $(x_0, w_0)$ of $\xi$ is called the
\emph{wave front set of} $\xi$ and denoted by $WF(\xi)$.

\item For a point $x\in U$ we denote $WF_x(\xi):=WF(\xi)\cap T^*_xU$.

\end{enumerate}
\end{defn}

\begin{rem}$ $
\begin{enumerate}
\item Heifetz  defined  $WF_{\Lambda}(\xi )$ for any open subgroup $\Lambda$ of $F^{\times}$ of finite index. Our definition above differs  slightly  from the definition in \cite{Hef}. They relate by
\begin{equation*}
WF(\xi)-(U \times \{0\})= WF_{F^{\times}}(\xi).
\end{equation*}
\item \RamiA{Though  the notion of Fourier transform depends on a choice of a non-degenerate additive character of $F$, this dependence effects the Fourier transform only by dilation, and thus does not change our notion of wave front set.}
\end{enumerate}
\end{rem}



\Rami{
\begin{prop}
[{see  \cite[Theorem 8.2.4]{Hor}  and \cite[Theorem 2.8]{Hef}}]\label{prop:SubPull}
\label{submrtion}
Let $U \subset F^m$ and $V \subset F^n$ be open subsets, and
suppose that  $\phi: U \to V$ is an analytic submersion. Then for any
$\xi \in \Sc^*(V)$, we have $$WF(\phi^*(\xi)) \subset \phi^*(WF(\xi)):=\left\{ (x,v) \in T^*U \vert \,  \exists w\in WF_{\phi(x)}(\xi) \text{ s.t. } d_{\phi(x)}^*\phi(w)=v \right \} .$$
\end{prop}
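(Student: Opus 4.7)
The plan is to reduce the general submersion case to a standard coordinate projection, and then verify the wave-front inclusion directly by factoring a localised Fourier transform.

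First, since the wave-front set is local on $U$ and is invariant under analytic diffeomorphisms (transforming contravariantly via the cotangent lift), I would invoke the non-Archimedean inverse function theorem to straighten $\phi$ in a neighborhood of each point of $U$ into the standard projection $\mathrm{pr}: V' \times W' \to V'$, where $W' \subset F^{m-n}$ is open. This reduces the statement to the case $\phi = \mathrm{pr}$.

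Next, for a projection, the pullback is nothing but the exterior tensor product with the constant distribution on the fibre. Concretely, after identifying distributions with generalised functions via a Haar measure, $\mathrm{pr}^*\xi$ pairs with $f(x,y)\,dx\,dy$ by $\xi\bigl(\int_{W'} f(\cdot,y)\,dy\bigr)$. Multiplying by a factored bump $\rho(x,y)=\rho_1(x)\rho_2(y) \in \Sc(V'\times W')$ yields the product distribution $(\rho_1\xi)\boxtimes \rho_2$, whose Fourier transform factors as
$$\cF(\rho\cdot \mathrm{pr}^*\xi)(\eta,\zeta) = \cF(\rho_1\xi)(\eta)\cdot \cF(\rho_2)(\zeta).$$
The crucial input is that $\cF(\rho_2)$ is again Schwartz, hence compactly supported on $(W')^*$.

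Finally, I would verify smoothness of $\mathrm{pr}^*\xi$ at any covector $((x_0,y_0),(\eta_0,\zeta_0)) \notin \mathrm{pr}^*(WF(\xi))$ by a dichotomy. If $(x_0,\eta_0) \notin WF(\xi)$, pick $\rho_1$ and a cut-off $\sigma_1$ witnessing the asymptotic vanishing of $\cF(\rho_1\xi)$ at $\eta_0$; tensoring with any bump $\rho_2$ at $y_0$ and $\sigma_2$ at $\zeta_0$ then gives the required compact support in $(\eta,\zeta,\lambda)$, because $\sigma_1$ already bounds $\lambda$. In the remaining case necessarily $\zeta_0 \neq 0$, and any bumps $\rho_1,\rho_2$ work: a $\sigma_2$ supported in a small neighborhood of $\zeta_0$ bounded away from $0$ together with the compact support of $\cF(\rho_2)$ forces $\cF(\rho_2)(\lambda\zeta)=0$ once $|\lambda|$ exceeds a fixed threshold.

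The main technical point, and the step I expect to require the most care, is the reduction via coordinates: the contravariant naturality of the wave-front set under analytic diffeomorphisms in Heifetz's $\ell$-space framework is the analogue of Hörmander's Theorem 8.2.4 applied to diffeomorphisms, and it is what legitimises the passage from an arbitrary submersion to the projection model. Once this is in hand, the projection case reduces to the elementary Fourier-side computation above.
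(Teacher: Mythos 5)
First, note that the paper does not actually prove Proposition \ref{prop:SubPull}: it is quoted from H\"ormander and Heifetz, and only the reverse inclusion (Corollary \ref{cor:SubPull}) is proved in the text. So your proposal is measured against the cited sources rather than against an argument in the paper. Your overall strategy --- straighten $\phi$ locally into a coordinate projection and treat the projection by hand --- is exactly the standard route, and your treatment of the projection case is correct and complete in the $p$-adic setting. The factorization $\cF(\rho_1\rho_2\cdot\mathrm{pr}^*\xi)=\cF(\rho_1\xi)\boxtimes\cF(\rho_2)$ is right; $\cF(\rho_2)$ is indeed again in $\Sc$, hence compactly supported, which is where the non-Archimedean setting is cleaner than the real one; and your dichotomy covers every covector outside $\mathrm{pr}^*(WF(\xi))$: if $(x_0,\eta_0)\notin WF(\xi)$ the cut-off $\sigma_1$ witnessing asymptotic vanishing already confines $\lambda$ to a bounded set uniformly in $\zeta$, and otherwise necessarily $\zeta_0\neq 0$, so a $\sigma_2$ supported away from $0$ kills the second factor once $|\lambda|$ is large. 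This also correctly handles the zero covectors that the paper's convention keeps inside $WF$.

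The one genuine gap is the reduction step. Writing $\phi=\mathrm{pr}\circ\Psi$ locally and using $WF(\Psi^*\eta)\subset\Psi^*(WF(\eta))$ for the straightening diffeomorphism $\Psi$ is not a harmless appeal to ``naturality'': the diffeomorphism case $m=n$ is itself a special case of the proposition you are proving, and it is precisely where the analytic content of H\"ormander's Theorem 8.2.4 and of Heifetz's Theorem 2.8 lives (the non-stationary-phase estimates showing that a localized Fourier transform still decays in the right directions after an analytic change of variables). You flag this yourself as the step requiring the most care, but you do not supply it, so as a self-contained proof the argument is incomplete exactly at the decisive point. If one is permitted to quote coordinate-invariance of the wave front set from Heifetz --- which is no worse than what the paper does in quoting the whole proposition --- then your derivation of the general submersion case from it, via locality of $WF$ and the chain rule for the cotangent maps, is correct.
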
}

\begin{cor}\label{cor:SubPull}
Under the assumption of \Cref{prop:SubPull}
we have $$WF(\phi^*(\xi)) = \phi^*(WF(\xi)).$$
\end{cor}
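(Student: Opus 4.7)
The inclusion $\subseteq$ is already \Cref{prop:SubPull}, so the task reduces to proving $\supseteq$. The statement is local on $U$ and $V$, so by the inverse function theorem applied to the analytic submersion $\phi$, I would first pick local coordinates in which $U = V \times W$ for some open $W \subseteq F^k$ and $\phi$ becomes the projection $\pi(y, z) = y$. In these coordinates, $\pi^*\xi$ coincides with the external tensor product $\xi \boxtimes 1_W$ (where $1_W$ is the constant generalized function pairing with a Schwartz measure by integration), and $\pi^*(WF(\xi))$ becomes $\{((y_0, z_0), (w_0, 0)) : (y_0, w_0) \in WF(\xi),\ z_0 \in W\}$.

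The main step is a contrapositive argument: fix $(y_0, w_0) \in WF(\xi)$ and $z_0 \in W$, and assume $\pi^*\xi$ is smooth at $((y_0, z_0), (w_0, 0))$. The plan is to exhibit a \emph{separable} bump $\rho_a \otimes \rho_b$ witnessing smoothness, with $\rho_a \in \Sc(V)$, $\rho_b \in \Sc(W)$, $\rho_a(y_0), \rho_b(z_0) \neq 0$ and $\int_W \rho_b \neq 0$. The separability can be arranged because, in the $p$-adic setting, Schwartz functions are finite sums of indicators of compact-open boxes, and because multiplying the bump by any Schwartz function dualizes on the Fourier side to convolution by a compactly supported Schwartz function, preserving asymptotic vanishing in a fixed direction. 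A direct Fubini computation then yields
\[
\cF^*\bigl((\rho_a \otimes \rho_b) \cdot \pi^*\xi\bigr)(w, \zeta) = \cF_V^*(\rho_a \xi)(w) \cdot \cF_W^*(\rho_b)(\zeta).
\]

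By the smoothness hypothesis there exists $\eta \in \Sc(V^* \times W^*)$ with $\eta(w_0, 0) \neq 0$ such that $(\lambda, w, \zeta) \mapsto \cF_V^*(\rho_a \xi)(\lambda w) \cdot \cF_W^*(\rho_b)(\lambda \zeta) \cdot \eta(w, \zeta)$ has compact support. Restricting to the slice $\zeta = 0$ and using $\cF_W^*(\rho_b)(0) = \int_W \rho_b \neq 0$, one sees that $(\lambda, w) \mapsto \cF_V^*(\rho_a \xi)(\lambda w) \cdot \eta(w, 0)$ has compact support, and since $\eta(w_0, 0) \neq 0$ this witnesses smoothness of $\xi$ at $(y_0, w_0)$, contradicting $(y_0, w_0) \in WF(\xi)$. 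The main obstacle will be carefully justifying the passage to a separable bump; once that is in place the remainder is a routine Fubini-and-slicing computation.
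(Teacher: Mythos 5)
Your proposal is correct and follows essentially the same route as the paper: reduce locally (via the normal form for submersions) to a coordinate projection and use $\phi^*\xi=\xi\boxtimes 1_W$. The paper simply asserts that the projection case ``follows from'' this identity, whereas you carry out the wave-front computation for $\xi\boxtimes 1_W$ explicitly, including the standard reduction to a separable bump that the paper leaves implicit.
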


\begin{proof}
The case when $\phi$ is an analytic diffeomorphism follows immediately from \Cref{prop:SubPull}. This implies the case of open embedding. It is left to prove the case of linear projection $\phi:F^{n+m}\to F^n$.  In this case the assertion follows from the fact that $\phi^*(\xi)=\xi\boxtimes 1_{F^m}$ where $ 1_{F^m}$ is the constant function 1 on $F^m$.
\end{proof}

\DimaB{This corollary enables to define the wave front set of any distribution on an $F$-analytic manifold, as a subset of the cotangent bundle. The precise definition follows.
\begin{defn}\label{def:bundle}
Let $X$ be an $F$-analytic manifold and $\xi \in \Sc^*(X)$. We define the wave front set $WF(\xi)$ as the set of all $(x,\lambda) \in T^*X$ which lie in the wave front set of $\xi$ in some local coordinates. In other words, $(x,\lambda) \in WF(\xi)$ if there exist open subsets $U\subset X$ and $V\subset F^n$, an analytic diffeomorphism $\phi:U \simeq V$ and $(y,\beta) \in T^*V$ such that $x\in U, \, \phi(x)=y, d_x\phi^*(\beta)=\lambda$, and $(y,\beta) \in WF((\phi^{-1})^*(\xi|_U))$.
\end{defn}}
%
\begin{thm}\label{thm: equivar_distr}(Corollary from \cite[Theorem 4.1.5]{A})
Let an $F$-analytic group $H$ act on an $F$-analytic manifold $Y$ \Dima{and}
let $\chi$ be a character of $H$.
Let $\xi  \in \Sc^*(Y)^{(H,\chi)}$. Then
$$WF(\xi) \subset \left\{ (x,v) \in T^*Y | v(T_x(Hx)) = 0 \right\}.$$
\end{thm}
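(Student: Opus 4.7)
The plan is to reformulate the $(H,\chi)$-equivariance of $\xi$ as an equality of two pullbacks under the analytic maps $a,p:H\times Y\to Y$ given by $a(h,y)=hy$ and $p(h,y)=y$, and then apply the pullback formula for wave front sets from \Cref{cor:SubPull} at points of the form $(e,y)$, where $e\in H$ is the identity. Since $\chi$ is locally constant, there is an open neighborhood $\Omega\subset H$ of $e$ on which $\chi\equiv 1$, and it suffices to work on $\Omega\times Y$.

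Both $a$ and $p$ are analytic submersions; for $a$ this uses that $y\mapsto hy$ is a diffeomorphism of $Y$ for every fixed $h\in H$. The $(H,\chi)$-equivariance of $\xi$ translates, via the diffeomorphism $m:H\times Y\to H\times Y$, $(h,y)\mapsto(h,hy)$, together with the identity $a=p\circ m$, into the equality of generalized functions $a^{*}\xi=\chi\cdot p^{*}\xi$ on $H\times Y$. Restricting to $\Omega\times Y$ gives $a^{*}\xi=p^{*}\xi$, and applying \Cref{cor:SubPull} to both sides yields the equality of subsets of $T^{*}(\Omega\times Y)$:
\begin{equation*}
a^{*}(WF(\xi))=WF(a^{*}\xi)=WF(p^{*}\xi)=p^{*}(WF(\xi)).
\end{equation*}

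It remains to extract the desired conclusion by analysing this equality at each point $(e,y)$. The codifferential of $p$ at $(e,y)$ embeds $T^{*}_{y}Y$ as $\{0\}\oplus T^{*}_{y}Y\subset \fh^{*}\oplus T^{*}_{y}Y$, so the fiber of $p^{*}(WF(\xi))$ over $(e,y)$ is $\{0\}\oplus WF_{y}(\xi)$. On the other hand, the derivative of $a$ at $(e,y)$ is the map $(X,v)\mapsto \sigma_{y}(X)+v$, where $\sigma_{y}:\fh\to T_{y}Y$ is the derivative at $e$ of the orbit map $h\mapsto hy$; its image is exactly $T_{y}(Hy)$. Hence the fiber of $a^{*}(WF(\xi))$ over $(e,y)$ is $\{(\sigma_{y}^{*}w,w):w\in WF_{y}(\xi)\}$. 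Equating the two fibers forces $\sigma_{y}^{*}w=0$ for every $w\in WF_{y}(\xi)$, i.e.\ $w$ annihilates $T_{y}(Hy)$, which is precisely the statement of the theorem. I expect the main technical point to be the step of identifying equivariance of $\xi$ with the pullback identity $a^{*}\xi=\chi\cdot p^{*}\xi$ of generalized functions on $H\times Y$; once this is set up carefully, the rest is bookkeeping with the submersion pullback formula together with a short linear-algebra calculation of codifferentials.
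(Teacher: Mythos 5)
Your argument is correct, but it is worth noting that the paper does not actually prove this statement: it is quoted as a corollary of \cite[Theorem 4.1.5]{A}, so the "paper's proof" is a citation. What you give is a direct, self-contained derivation from the submersion pullback formula (\Cref{cor:SubPull}), which is essentially the standard proof of the cited result, transported to the $p$-adic setting. The structure is sound: $a,p:\Omega\times Y\to Y$ are submersions, equivariance plus local constancy of $\chi$ gives $a^*\xi=p^*\xi$ near $\{e\}\times Y$, and comparing the fibers of $a^*(WF(\xi))$ and $p^*(WF(\xi))$ over $(e,y)$ forces $\sigma_y^*w=0$ for every $w\in WF_y(\xi)$; in fact you only need the easy equality for $p$ (the $\xi\boxtimes 1$ case) together with the \emph{inclusion} $WF(a^*\xi)\subset a^*(WF(\xi))$ from \Cref{prop:SubPull}, so the full strength of \Cref{cor:SubPull} for $a$ is not required. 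You correctly flag the one step that needs care, namely upgrading the slice-wise identity $(L_h)^*\xi=\chi(h)^{\pm1}\xi$ to the identity $a^*\xi=(\chi\boxtimes 1)\cdot p^*\xi$ of generalized functions on $H\times Y$; for $l$-spaces this is unproblematic because $\Sc(H\times Y)=\Sc(H)\otimes\Sc(Y)$ algebraically, so the identity can be checked on product test measures. One further small point of interpretation: for a general $F$-analytic action the orbit $Hx$ need not be a submanifold, so $T_x(Hx)$ in the statement should be read as the image of $d_e(h\mapsto hx)$, which is exactly what your computation produces. The trade-off between the two routes is the usual one: the citation keeps the paper short, while your argument makes the result verifiable from the tools already set up in Section~\ref{subsec:WF}.
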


\begin{thm}[{ \cite[Theorem 4.1.2]{A}}]\label{thm: wfs invariance to shifts}
Let $Y \subset X$ be $F$-analytic manifolds and let $y \in Y$. Let $\xi \in \cS^*(X)$ and suppose
that $\Supp(\xi) \subset Y$. Then $WF_{y}(\xi) $ is invariant with respect to shifts by the conormal space
$CN_{Y,y}^X$.
\end{thm}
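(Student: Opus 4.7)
The assertion is local at $y$, so after choosing suitable local coordinates we may assume $X = F^n$, $y$ is the origin, and $Y = F^k \times \{0\}$. Under these identifications $CN^X_{Y,y}$ becomes $\{0\} \times F^{n-k} \subset T^*_0 X \cong F^n$. My plan is to identify $\xi$ locally with an external tensor product $\eta \boxtimes \delta_0$, and then read off the conormal invariance from the fact that the Fourier transform of any localization of $\xi$ is independent of the conormal variable.

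The first step is a structure theorem for distributions supported on $Y$ that relies on the totally disconnected topology of $F$. Any $f \in \Sc(X)$ that vanishes on $Y$ is locally constant and vanishes at every point of $Y$, hence vanishes on an open neighborhood of $Y$; in particular its support is a compact subset of $X \setminus Y$, so $f$ lies in $\Sc(X \setminus Y)$. Using a compact open neighborhood of $0 \in F^{n-k}$ one sees conversely that the restriction map $\Sc(X) \to \Sc(Y)$ is surjective with kernel exactly $\Sc(X \setminus Y)$, so a distribution on $X$ supported on $Y$ is canonically a distribution on $Y$. In coordinates this identification takes the explicit form $\xi = \eta \boxtimes \delta_0$ with $\eta \in \Sc^*(F^k)$ and $\delta_0 \in \Sc^*(F^{n-k})$ the Dirac delta at the origin.

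With this in hand, for any $\rho \in \Sc(X)$ a direct computation gives $\rho\xi = \bigl(\rho(\cdot,0)\,\eta\bigr) \boxtimes \delta_0$. Since $\rho\xi$ has compact support, its Fourier transform is a smooth function, and
\[
\widehat{\rho\xi}(\omega_1,\omega_2) = \widehat{\rho(\cdot,0)\,\eta}(\omega_1),
\]
independent of $\omega_2 \in F^{n-k}$. By \cref{def:wf}, membership of $(0,w)$ in $WF(\xi)$ is controlled by the asymptotic behaviour of $\widehat{\rho\xi}$ along the ray through $w$; since $\widehat{\rho\xi}$ does not depend on $\omega_2$, this behaviour at $(w_1,w_2)$ coincides with that at $(w_1, w_2 + v_2)$ for every $v_2 \in F^{n-k}$. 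Consequently $WF_y(\xi)$ is invariant under shifts by $CN^X_{Y,y}$, as claimed.

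I expect the structure theorem in the second paragraph to be the main input. It uses crucially the non-Archimedean fact that a locally constant function vanishing at a point vanishes on a whole neighborhood, a phenomenon that fails over $\RR$; in the Archimedean setting distributions supported on $Y$ may carry transverse derivatives, and the analog requires a substantially more delicate Fourier-analytic argument. Once the structure result is available, the Fourier computation and the reduction to \cref{def:wf} are routine.
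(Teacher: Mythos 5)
The paper does not actually prove this statement: it is quoted verbatim from \cite[Theorem 4.1.2]{A}, so there is no internal proof to compare against. Your argument is correct and is essentially the standard (and, to the best of my knowledge, the cited) proof: after localizing to coordinates, the non-Archimedean exact sequence $0\to\Sc(X\setminus Y)\to\Sc(X)\to\Sc(Y)\to 0$ forces $\rho\xi=\eta\boxtimes\delta_0$, whose Fourier transform is constant in the conormal variable, and constancy of $\widehat{\rho\xi}$ in $\omega_2$ translates (via the shifted cutoff $\rho''(\omega_1,\omega_2)=\rho'(\omega_1,\omega_2-v_2)$) into invariance of $WF_y(\xi)$ under shifts by $CN^X_{Y,y}$.
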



\begin{cor}\label{cor:WCI}
Let $M$  be an $F$-analytic  manifold and $N\subset M$ be a closed algebraic submanifold. Let $\xi$ be a distribution on $M$ supported in $N$. Suppose that for any $x \in N$, we have $CN_{N,x}^M \nsubseteq WF_x(\xi)$. Then $\xi=0$.
\end{cor}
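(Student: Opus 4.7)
The plan is to argue by contradiction: assuming $\xi \neq 0$, I will produce $x \in N$ with $CN_{N,x}^M \subseteq WF_x(\xi)$. By \Cref{thm: wfs invariance to shifts}, $WF_x(\xi)$ is a union of cosets of the linear subspace $CN_{N,x}^M \subset T_x^* M$; since the coset through the origin is $CN_{N,x}^M$ itself, the desired containment is equivalent to $(x, 0) \in WF(\xi)$. The problem thus reduces to producing an $x \in N$ with $(x, 0) \in WF(\xi)$.

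I claim more precisely that $(x, 0) \in WF(\xi)$ if and only if $x \in \Supp(\xi)$, directly from \Cref{def:wf}. If $x \notin \Supp(\xi)$, take $\rho$ to be the indicator of a small clopen neighborhood of $x$ disjoint from $\Supp(\xi)$; then $\rho\xi = 0$, so $\cF^*(\rho\xi) \equiv 0$ trivially vanishes asymptotically at $0$. Conversely, suppose $x \in \Supp(\xi)$ and let $\rho$ be any non-negative Schwartz function with $\rho(x) \neq 0$. Then $\rho\xi \neq 0$, so $\cF^*(\rho\xi)(\eta_1) \neq 0$ for some $\eta_1 \in V^*$. Any $\rho' \in \cS(V^*)$ with $\rho'(0) \neq 0$ is locally constant and hence non-zero on a whole ball around $0$; substituting $w = \eta_1/\lambda$ into $\phi(w, \lambda) = \cF^*(\rho\xi)(\lambda w)\rho'(w)$ yields $\phi(\eta_1/\lambda, \lambda) = \cF^*(\rho\xi)(\eta_1)\rho'(0) \neq 0$ for all sufficiently large $|\lambda|$ (since then $\eta_1/\lambda$ lies in the ball of local constancy of $\rho'$ at $0$). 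Thus $\phi$ has unbounded support in $\lambda$, so $\cF^*(\rho\xi)$ does not vanish asymptotically at $0$; as this holds for every admissible $\rho$, $(x, 0) \in WF(\xi)$.

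Combining the two steps, if $\xi \neq 0$ then any $x \in \Supp(\xi) \subseteq N$ satisfies $(x, 0) \in WF(\xi)$ and hence $CN_{N,x}^M \subseteq WF_x(\xi)$, contradicting the hypothesis. The main technical ingredient is the Fourier-analytic verification in the second paragraph that $(x, 0) \in WF(\xi)$ whenever $x \in \Supp(\xi)$; this depends essentially on the local constancy of non-Archimedean Schwartz functions near the origin, which forces the scale parameter $\lambda$ to be unbounded whenever $\cF^*(\rho\xi)$ is not identically zero.
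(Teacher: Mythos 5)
Your proof is correct and follows essentially the same route as the paper: pick $x\in\Supp(\xi)$, observe $(x,0)\in WF_x(\xi)$, and apply \Cref{thm: wfs invariance to shifts} to get $CN_{N,x}^M\subseteq WF_x(\xi)$, contradicting the hypothesis. The only difference is that you supply a careful Fourier-analytic verification of the standard fact $(x,0)\in WF(\xi)\Leftrightarrow x\in\Supp(\xi)$, which the paper asserts without proof; your verification is sound.
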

\begin{proof}
Suppose $\xi \ne 0$ and let $x \in \Supp(\xi)$. Then $(x,0) \in WF_x(\xi)$. But then from
Theorem \ref{thm: wfs invariance to shifts} we have $CN_{N,x}^M \subseteq WF_x(\xi)$ which contradicts our assumption on $\xi$.
\end{proof}

\subsection{Vanishing of equivariant distributions}
The following criterion for vanishing of equivariant distributions follows from \cite[\S 6]{BZ} and \cite[\S 1.5]{Ber}.

\begin{thm}[Bernstein-Gelfand-Kazhdan-Zelevinsky] \label{thm:BGKZ}
Let an algebraic group $\mathbf{H}$ act on an algebraic variety $\mathbf{X}$, both defined over $F$. Let $\chi$ be a character of $H$. 
Let $Z\subset X$ be a closed \Rami{$H$-invariant} subset.
Suppose that for any $x\in Z$
we have $$\chi|_{H_x}\neq\Delta_{H}|_{H_x}  \Delta_{H_x} ^{-1},$$
where $\Delta_{H}$ and $\Delta_{H_x}$ denote the modular functions of the groups $H$ and $H_x$.
Then there are no non-zero $(H,\chi)$-equivariant distributions on $X$ supported in $Z$.
\end{thm}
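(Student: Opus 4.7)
The plan is to follow the classical stratification method of \cite[\S 6]{BZ} and \cite[\S 1.5]{Ber}. The starting observation is a single-orbit computation: identifying $Hx$ with $H/H_x$, one has $\Sc^*(Hx)^{(H,\chi)}\neq 0$ iff there exists an $(H,\chi)$-equivariant measure on $H/H_x$, which in turn occurs iff $\chi|_{H_x}=\Delta_H|_{H_x}\Delta_{H_x}^{-1}$; in that case the space is one-dimensional, spanned by that equivariant measure. The hypothesis of the theorem therefore rules out any nonzero $(H,\chi)$-equivariant distribution concentrated on a single orbit $Hx\subset Z$.

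To upgrade this pointwise vanishing to a global vanishing on $Z$, I would argue by Noetherian induction on $Z$. Assume for contradiction that there exists a nonzero $\xi \in \Sc^*(X)^{(H,\chi)}$ with $\Supp(\xi)\subset Z$, and choose $\xi$ with $Z_0:=\Supp(\xi)$ minimal among closed $\mathbf{H}$-invariant subsets of $Z$ supporting such a distribution. Because the action of $\mathbf{H}$ on $\mathbf{X}$ is algebraic, its orbits are locally closed and admit a constructible stratification; hence there is a Zariski-open $\mathbf{H}$-invariant subvariety $\mathbf{U}\subset \mathbf{Z}_0$ whose $F$-points $U$ consist of $H$-orbits all of the generic common dimension. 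The $H$-equivariant restriction sequence
\begin{equation*}
0 \to \Sc^*(Z_0)_{Z_0\setminus U}^{(H,\chi)} \to \Sc^*(Z_0)^{(H,\chi)} \to \Sc^*(U)^{(H,\chi)}
\end{equation*}
is exact in the $l$-space setting, and by the minimality of $Z_0=\Supp(\xi)$ the image $\xi|_U$ is nonzero.

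The final step is to descend from the uniform-orbit-dimension stratum $U$ to a single orbit. After further shrinking $U$, the quotient $U\to U/H$ becomes an $H$-equivariant fibration in the category of $l$-spaces with fiber isomorphic to $H/H_x$ for suitable $x\in U$. Applying the Bernstein--Zelevinsky localization principle fiberwise, a nonzero $(H,\chi)$-equivariant distribution on $U$ must restrict nontrivially to some fiber $Hx$, producing a nonzero element of $\Sc^*(Hx)^{(H,\chi)}$ with $x\in Z$, in contradiction with the single-orbit vanishing established at the start.

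The main obstacle is precisely this last fiberwise descent: the orbit space $U/H$ is not a priori an algebraic variety, and the $F$-points of a single algebraic orbit may split into several $H$-orbits, so one cannot naively apply Frobenius reciprocity to the whole stratum at once. Overcoming it requires the constructible-stratification machinery of \cite[\S 6]{BZ}, combined with the exactness of the Schwartz functor on $l$-spaces, which together allow one to localize $\xi$ successively along the orbit-parameter space until the support is pinned down to an individual orbit.
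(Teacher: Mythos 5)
The paper gives no proof of this theorem; it is quoted as a known result with a pointer to \cite[\S 6]{BZ} and \cite[\S 1.5]{Ber}, and your proposal is a correct reconstruction of precisely that standard argument: the single-orbit Frobenius-type criterion via the modular-character condition, stratification of the algebraic action, and Bernstein's localization principle to reduce a constant-orbit-dimension stratum to individual orbits. Two remarks. First, your Noetherian induction is set up over subsets closed in the $l$-space topology (you take $Z_0=\Supp(\xi)$ minimal), and that family is not Noetherian; the induction must run over Zariski-closed $\mathbf{H}$-invariant subvarieties --- e.g.\ descending induction on $d$ for the sets $X_d$ of points whose $\mathbf{H}$-orbit has dimension at most $d$ --- with $\Supp(\xi)$ used only to find a stratum on which the restriction of $\xi$ is nonzero. (In fact, once localization on the open stratum is in hand, minimality of the support is not needed: nonvanishing of $\xi|_U$ is immediate from $U$ being a nonempty open subset of the support.) Second, the genuinely hard step is the one you correctly isolate at the end: on a stratum of constant orbit dimension one needs the orbit-parameter space to be a Hausdorff $l$-space and each geometric orbit to split into \emph{finitely many} open-and-closed $H$-orbits (finiteness of $H^1(F,\mathbf{H}_x)$), after which the localization principle of \cite[\S 6]{BZ} applies fiberwise. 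You flag this and defer it to the cited references, which is appropriate for a statement the paper itself only cites; a self-contained proof would have to supply exactly those two inputs.
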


\DimaC{
\subsection{Characters of unipotent groups}
The following lemma is standard.
\begin{lem}\label{lem:char}
Let $\mathbf{V}$ be a unipotent algebraic group defined over $F$, let $\alpha$ be a (locally constant, complex) character of $V$ and $\beta$ be a non-trivial character of $F$. Then there exists an algebraic  group morphism $\phi:\mathbf{V} \to \mathbb{G}_a$ such that $\alpha=\beta \circ \phi$.
\end{lem}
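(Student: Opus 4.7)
The plan is to reduce the problem to characters of the additive group $F^n$ and then invoke Pontryagin self-duality of $F$ (given a nontrivial $\beta$).

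First, I would reduce to the case of a commutative unipotent group. Since $\alpha$ takes values in the abelian group $\mathbb{C}^\times$, its kernel contains $[V,V]$. The derived subgroup $[\mathbf{V},\mathbf{V}]$ is an algebraic subgroup of $\mathbf{V}$ defined over $F$, and since $\mathbf{V}$ is unipotent over a field of characteristic zero, the quotient $\mathbf{V}^{\mathrm{ab}} := \mathbf{V}/[\mathbf{V},\mathbf{V}]$ is again a unipotent algebraic group over $F$, and $H^1(F,[\mathbf{V},\mathbf{V}]) = 0$ (by d\'evissage to $\mathbb{G}_a$, for which $H^1$ vanishes trivially). Hence $V/[V,V] = \mathbf{V}^{\mathrm{ab}}(F)$ on the nose, and $\alpha$ descends to a locally constant character of $\mathbf{V}^{\mathrm{ab}}(F)$.

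Next I would identify $\mathbf{V}^{\mathrm{ab}}$ with a vector group. Over a field of characteristic zero every commutative unipotent algebraic group is isomorphic (via the exponential) to $\mathbb{G}_a^n$ for some $n$. Fixing such an $F$-isomorphism, the problem becomes: every locally constant character $\alpha : F^n \to \mathbb{C}^\times$ has the form $x \mapsto \beta(\phi(x))$ for some $F$-linear functional $\phi: F^n \to F$, which is automatically an algebraic morphism $\mathbb{G}_a^n \to \mathbb{G}_a$. Composing with the projection $\mathbf{V} \to \mathbf{V}^{\mathrm{ab}} \simeq \mathbb{G}_a^n$ yields the required $\phi : \mathbf{V} \to \mathbb{G}_a$.

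Finally, the core input is Pontryagin self-duality of $F$: the map $F \to \widehat{F}$, $y \mapsto (x \mapsto \beta(xy))$, is an isomorphism of topological groups whenever $\beta$ is nontrivial. Tensoring gives that every continuous (equivalently, locally constant) character of $F^n$ is of the form $x \mapsto \beta\bigl(\sum y_i x_i\bigr)$ for a unique $(y_1,\ldots,y_n) \in F^n$, producing the required $F$-linear $\phi$. This last step is the only real content; the first two are standard structural reductions, and no step presents a serious obstacle once one is willing to appeal to Pontryagin duality over $F$.
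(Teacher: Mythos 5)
Your overall route is the same as the paper's: kill the derived subgroup, use vanishing of Galois cohomology of unipotent groups to pass to the $F$-points of the algebraic abelianization, identify that with $\mathbb{G}_a^n$, and finish with Pontryagin self-duality of $F$. The duality step and the cohomological step are fine as you state them.

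The one place you pass too quickly is the claim that $V/[V,V]=\mathbf{V}^{\mathrm{ab}}(F)$, equivalently that $\alpha$ descends to a character of $\mathbf{V}^{\mathrm{ab}}(F)$. Vanishing of $H^1(F,[\mathbf{V},\mathbf{V}])$ gives surjectivity of $\mathbf{V}(F)\to(\mathbf{V}/[\mathbf{V},\mathbf{V}])(F)$, i.e. $\mathbf{V}(F)/[\mathbf{V},\mathbf{V}](F)=(\mathbf{V}/[\mathbf{V},\mathbf{V}])(F)$; but what you actually know about $\alpha$ is only that its kernel contains the \emph{abstract} commutator subgroup $[V,V]$ of the group of $F$-points, and a priori $[V,V]\subset[\mathbf{V},\mathbf{V}](F)$ could be a strict inclusion. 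Closing this gap --- showing every $F$-point of the algebraic derived group is a product of commutators of $F$-points --- is precisely where the paper spends most of its appendix: it proves via the Baker--Campbell--Hausdorff formula that both $[V,V]$ and $[\mathbf{V},\mathbf{V}](F)$ coincide with $\exp([\fv,\fv])$ (Lemma \ref{lem:exp} and the corollary following it). With that identity supplied, your argument closes and is essentially identical to the paper's.
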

For completeness we include a proof in Appendix \ref{App:char}. In the case when $\mathbf{V}$ is a maximal unipotent subgroup of a reductive group and $F$ is an arbitrary field (of an arbitrary characteristic) this lemma is  {\cite[Theorem 4.1]{BH}}. }

\section{Proof of Theorem \ref{thm: main theorem}}\label{sec:PfMain}
\begin{lem}\label{lem:WF}
Let $x \in G$. Let $\xi$ be a $(U,\psi)$-left equivariant and
$(H,\chi)$-right equivariant distribution on $G$ such that $WF(\xi) \subset G \times \NN_{\fg^*}$.
Then $WF_x(\xi) \subset CN^{G}_{BxH,x}$.
\end{lem}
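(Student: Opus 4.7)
My plan is to combine the general equivariance constraint on the wave-front set with the nilpotency hypothesis. First I would apply \Cref{thm: equivar_distr} to the action of $U \times H$ on $G$ given by $(u,h) \cdot y = uyh^{-1}$, under which $\xi$ is $(\psi \times \chi)$-equivariant; this yields $WF_x(\xi) \subset CN^G_{UxH,x}$. Trivializing $T_x G \cong \fg$ by right translation identifies $T_x(UxH) \leftrightarrow \fu + \mathrm{Ad}(x)\fh$ and $T_x(BxH) \leftrightarrow \fb + \mathrm{Ad}(x)\fh$, so dually $CN^G_{UxH,x}$ and $CN^G_{BxH,x}$ correspond inside $\fg^*$ to $\fu^\perp \cap (\mathrm{Ad}(x)\fh)^\perp$ and $\fb^\perp \cap (\mathrm{Ad}(x)\fh)^\perp$ respectively. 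Combined with $WF_x(\xi) \subset \NN_{\fg^*}$, the lemma reduces to the purely Lie-algebraic inclusion
\[
\fu^{\perp} \cap \NN_{\fg^*} \subset \fb^{\perp}.
\]

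To prove this I would fix a $G$-invariant non-degenerate symmetric bilinear form on $\fg$ (available because $\fg$ is reductive) and use it to identify $\fg \cong \fg^*$. Under this identification, $\NN_{\fg^*}$ corresponds to $\NN_\fg$, and a root-space computation gives $\fu^\perp = \fb$ and $\fb^\perp = \fu$ inside $\fg$. The claim thus becomes the standard structural fact that every nilpotent element of $\fb$ lies in $\fu$: writing $X = T + N$ with $T\in\ft$ and $N\in\fu$, the semisimple Jordan part of $X$ must vanish since $X$ is nilpotent, forcing $T = 0$ and hence $X \in \fu$. Equivalently, in a faithful representation realizing $\fb$ as upper triangular matrices, $X$ becomes a nilpotent upper triangular matrix, whose diagonal (i.e.\ the $\ft$-component) must vanish.

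The main subtlety I anticipate is not substantive but notational: keeping straight the right-invariant trivialization of the (co)tangent bundle so that the $U$- and $H$-equivariance translate to the correct subspaces $\fu$ and $\mathrm{Ad}(x)\fh$ of $\fg$, and confirming that nilpotent adjoint and coadjoint orbits match under the chosen invariant form. Once these identifications are nailed down, the remaining Lie-algebraic content is essentially a one-line Jordan-decomposition argument.
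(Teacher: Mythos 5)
Your argument is correct and follows the paper's proof essentially verbatim: trivialize $T_x^*G$ by right translation, apply \Cref{thm: equivar_distr} to the $U$- and $H$-actions to get $WF_x(\xi)\subset \fu^\perp\cap(\mathrm{Ad}(x)\fh)^\perp$, and intersect with $\NN_{\fg^*}$ using $\fu^\perp\cap\NN_{\fg^*}\subset(\ft+\fu)^\perp$. The only difference is that you actually justify this last Lie-algebraic identity (which the paper asserts without proof); just note that in your Jordan-decomposition phrasing the semisimple part of $X=T+N$ need not equal $T$ when they do not commute, so the clean argument is the matrix one (a nilpotent upper-triangular element is strictly upper triangular, and faithfulness on $\ft$ forces $T=0$).
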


\begin{proof}
Let $\ft$ be the Lie algebra of a maximal torus contained in $B$, and
let $\fh , \mathfrak{u}$ be the Lie algebras of $H,U$ respectively.
Identify $T^*_xG$ with $\fg^*$ using the right multiplication by $x^{-1}$. We have $CN_{BxH,x}^G= (\ft+\mathfrak{u}+ad(x)\fh)^\bot$.
Since $\xi$ is $\mathfrak{u}$-equivariant, by Theorem \ref{thm: equivar_distr} 
we have $WF_x(\xi)\subset \mathfrak{u}^\bot$. Similarly, since $\xi$ is $\fh$-equivariant on the right, we have
$WF_x(\xi) \subset (ad(x)\fh)^\bot$.
By our assumption $WF_x(\xi)\subset \cN_{\g^*} $. Now,
$\mathfrak{u}^\bot \cap \cN_{\g^*} = (\ft + \mathfrak{u})^\bot$ and thus
$$WF_x(\xi)\subset (ad(x)\fh)^\bot \cap \mathfrak{u}^\bot \cap \cN_{\g^*} =
(ad(x)\fh)^\bot \cap (\mathfrak{u}+\ft)^\bot =
(\ft+\mathfrak{u}+ad(x)\fh)^\bot = CN_{BxH,x}^G.$$
\end{proof}

%

\RamiA{
Now we would like to describe the structure of  the varieties $\cO_c$. For this we will use the following notation.
\begin{notn}
For a $B \times H$ double coset $\cO=BxH \subset G$ define $$\tilde \cO_c=\bigcup_{\cO'=ByH \subset (\mathbf{B}x  \mathbf{H})(F) } {\cO'_c}$$
\end{notn}
\begin{lem}\label{lem:AlgVar} For any  double coset $\cO=B x H\subset G$ there exists a closed algebraic subvariety  $\tilde{ \mathbf{O}}_c \subset \mathbf{B}x\mathbf{H}$ s.t.  $\tilde{\cO}_c=\tilde{\mathbf{O}}_c(F)$.
\end{lem}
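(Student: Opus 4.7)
The plan is to algebraize the condition defining $\tilde{\cO}_c$ using \Cref{lem:char} and to descend the result to a closed algebraic subvariety of $\mathbf{B}x\mathbf{H}$. Fix a nontrivial additive character $\beta$ of $F$ and set $\mathbf{N} := \mathbf{U} \cap x\mathbf{H}x^{-1}$, a unipotent $F$-subgroup of $\mathbf{G}$. Applying \Cref{lem:char} I obtain an algebraic morphism $\phi_U \colon \mathbf{U} \to \mathbb{G}_a$ with $\psi = \beta \circ \phi_U$ on $U$; and, by applying the same lemma to $\chi$ restricted to the unipotent subgroup $\mathbf{M} := \mathbf{H} \cap x^{-1}\mathbf{U}x$ and transporting via conjugation by $x$, an algebraic morphism $\phi_\alpha \colon \mathbf{N} \to \mathbb{G}_a$ with $\chi(x^{-1}vx) = \beta(\phi_\alpha(v))$ on $N$.

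For any $y = bxh$ in $\mathbf{B}x\mathbf{H}$, the equality $\mathbf{H}^y \cap \mathbf{U} = b\mathbf{N}b^{-1}$ (since $\mathbf{B}$ normalizes $\mathbf{U}$) together with conjugation-invariance of $\chi$ (a character to an abelian group) rewrites the condition defining $\cO'_c$, after pullback along $v \mapsto bvb^{-1}$, as $\phi_U(bvb^{-1}) = \phi_\alpha(v)$ for every $v \in \mathbf{N}$. Setting $\Psi(b,v) := \phi_U(bvb^{-1}) - \phi_\alpha(v)$, for fixed $b$ the map $v \mapsto \Psi(b,v)$ is an algebraic group homomorphism $\mathbf{N} \to \mathbb{G}_a$; passing to the abelianization of $\mathbf{N}$ it becomes $F$-linear, so its image on the $F$-points is either $\{0\}$ or all of $F$. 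Since $\beta$ is nontrivial on $F$, the condition $\beta \circ \Psi(b, \cdot) \equiv 1$ on $N$ is thus equivalent to $\Psi(b,\cdot) \equiv 0$ as a scheme morphism, which in turn is cut out by finitely many polynomials in $b$. Let $\mathbf{B}_c \subset \mathbf{B}$ denote the resulting closed $F$-subvariety.

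Next I verify that $\mathbf{B}_c$ is right-invariant under the stabilizer $\mathbf{B} \cap x\mathbf{H}x^{-1}$: for $b_1 = xh_1x^{-1}$ with $h_1 \in \mathbf{H}$, conjugation-invariance of $\chi$ gives $\phi_\alpha(b_1 v b_1^{-1}) = \phi_\alpha(v)$, which yields the invariance. The multiplication map $f \colon \mathbf{B} \times \mathbf{H} \to \mathbf{B}x\mathbf{H}$, $(b,h) \mapsto bxh$, is a smooth surjection whose fibers are single orbits of this stabilizer; consequently the closed invariant subset $\mathbf{B}_c \times \mathbf{H}$ descends to a closed $F$-subvariety $\tilde{\mathbf{O}}_c := \mathbf{B}_c x \mathbf{H}$ of $\mathbf{B}x\mathbf{H}$. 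For any $y \in (\mathbf{B}x\mathbf{H})(F)$ with $\bar{F}$-decomposition $y = bxh$, the preceding analysis equates the character equality at $y$ with $b \in \mathbf{B}_c(\bar{F})$, i.e.\ with $y \in \tilde{\mathbf{O}}_c(F)$, giving $\tilde{\cO}_c = \tilde{\mathbf{O}}_c(F)$.

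The main obstacle I anticipate is this last step: $b$ need not be $F$-rational, so one must ensure that the identity $\chi^y(bvb^{-1}) = \chi(x^{-1}vx)$ used above, originally a statement about $F$-valued characters, extends to an identity of $\bar{F}$-algebraic morphisms. The cleanest remedy is first to construct, via the Jordan decomposition on the abelianization of $\mathbf{H}$, an algebraic $F$-morphism $\Phi_\chi \colon \mathbf{H} \to \mathbb{G}_a$ such that $\chi(h) = \beta(\Phi_\chi(h))$ for every unipotent $h \in H$, and to take $\phi_\alpha(v) := \Phi_\chi(x^{-1}vx)$; the required invariance identity $\Phi_\chi(h_1^{-1}zh_1) = \Phi_\chi(z)$ is then an identity of algebraic group homomorphisms into an abelian target, automatically valid over $\bar{F}$, and the stabilizer-invariance of $\mathbf{B}_c$ is exactly what makes the algebraic condition well-defined on $y$ alone, independently of the chosen decomposition.
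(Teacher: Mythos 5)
Your first half reproduces the heart of the paper's argument: lift $\psi$ and $\chi|_{H\cap x^{-1}Ux}$ to algebraic morphisms into $\mathbb{G}_a$ via Lemma \ref{lem:char}, and observe that the image on $F$-points of an algebraic homomorphism from a unipotent group to $\mathbb{G}_a$ is an $F$-subspace of $F$, hence $\{0\}$ or $F$, so that containment in $\ker\beta$ forces identical vanishing; this upgrades the character identity to a Zariski-closed condition. Up to your reparametrization by $\mathbf{B}\times\mathbf{H}$, this is the same proof.

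The gap is exactly where you place it, and your remedy does not close it. An $F$-point $y$ of $\mathbf{B}x\mathbf{H}$ need not factor as $bxh$ with $b\in B$, $h\in H$ --- this failure of rationality is precisely why $\tilde\cO_c$ is a union over several $B\times H$-cosets --- so your reduction of the condition at $y$ to ``$b\in\mathbf{B}_c$'' is carried out with $b,h$ only in $\bar F$-points, where neither $\chi$ nor the identity $\chi^y(bvb^{-1})=\chi(x^{-1}vx)$ (an identity of $\mathbb{C}$-valued characters) makes sense; the same problem infects the stabilizer-invariance of $\mathbf{B}_c$, which you verify only for $F$-rational stabilizer elements, and these need not be Zariski dense in the possibly disconnected group $\mathbf{B}\cap x\mathbf{H}x^{-1}$. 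Your proposed fix --- an algebraic $F$-morphism $\Phi_\chi:\mathbf{H}\to\mathbb{G}_a$ with $\chi=\beta\circ\Phi_\chi$ on all unipotent elements of $H$ --- is asserted, not constructed: Lemma \ref{lem:char} applies only to unipotent $\mathbf{V}$, while $\mathbf{H}$ is an arbitrary closed subgroup and $\chi$ an arbitrary locally constant character. (Indeed, if $\mathbf{H}$ is reductive then every algebraic morphism $\mathbf{H}\to\mathbb{G}_a$ is zero, so your remedy silently asserts that $\chi$ kills every unipotent element of $H$ --- true there but requiring its own proof, and unavailable for non-reductive $\mathbf{H}$.) The paper sidesteps all of this: it fixes once and for all the unipotent group $\mathbf{H}_{\cO}=\mathbf{H}\cap x^{-1}\mathbf{U}x$ together with the lifts $\psi',\chi'$, and expresses the condition directly on the rational point $y$ as $\psi'(yuy^{-1})=\chi'(u)$ for all $u\in H_{\cO}$, so that only $F$-points ever enter the comparison between the analytic and algebraic conditions; Zariski density of $H_{\cO}$ in $\mathbf{H}_{\cO}$ then makes the locus closed. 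Reworking your argument in that coordinate-free form would eliminate both the descent step and the rationality problem.
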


\begin{proof}
Note that $$\cO_c=\left \{x \in \cO \, |\, \left. \psi^{x^{-1}} \right|_{H \cap U^{x^{-1}}} =  \left. \chi \right|_{H \cap U^{x^{-1}}} \right \}. $$

 Let $\mathbf H_x:= \mathbf H \cap \mathbf U^{x^{-1}}$. Since $\mathbf U$ is normal in $\mathbf B$, for any $y\in \mathbf  (\mathbf{B}x  \mathbf{H})(F)$ we have $\mathbf H_x= \mathbf H_y$. Thus we will denote $\mathbf H_{\cO}:=\mathbf H_x$.

By Lemma \ref{lem:char} there exist
an additive character $\beta$ of $F$ and algebraic group homomorphisms $\psi':\mathbf U \to \mathbb{G}_a$, $\chi':\mathbf H_{\cO}\to \mathbb{G}_a$ such that   $\psi=\beta \circ \psi'$ and $\chi|_{H_{\cO}}=\beta \circ \chi'$.\\
Let us show that $$\tilde \cO_c=\left\{y\in (\mathbf{B}x  \mathbf{H})(F)| \, \left. (\psi')^{y^{-1}}\right|_{\mathbf H_{\cO}} =   \chi'\right \}$$%
%
Indeed,  if $y \in \tilde \cO_c$  then $\beta\circ \left. (\psi')^{y^{-1}} \right|_{ H_{\cO}} =   \beta\circ \chi'$,  hence  $ \beta \circ (\chi'-(\psi')^{y^{-1}}|_{\mathbf H_{\cO}})=1$, thus $\chi'-(\psi')^{y^{-1}}|_{\mathbf H_{\cO}} $ is bounded on $H_{\cO}$, and thus $\chi'-(\psi')^{y^{-1}}|_{\mathbf H_{\cO}}$ is trivial. We obtain $$\tilde{ O}_c=\{y\in (\mathbf{B}x  \mathbf{H})(F) \, \vert \, \forall u\in  H_{\cO} \text{ we have} \, \psi'(yuy^{-1})=\chi'(u)\},$$
which  is clearly the set of $F$-points of a closed algebraic subvariety  of $\mathbf{B}x\mathbf{H}$.
\end{proof}}

\begin{cor}$\,$\label{cor:strat}
\begin{enumerate}
\item There exists a stratification of $\cO_c$ into a union of smooth $F$-analytic locally closed submanifolds $\cO_c^i$ s.t. $\bigcup_{i\leq i_0} \cO_c^i$ is open in $\cO_c$.
\item Moreover, if $\cO_c \neq \cO$ then the dimensions of $\cO_c^i$ are strictly smaller than the dimension of $\cO_c$. 

\end{enumerate}
\end{cor}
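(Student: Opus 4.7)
The plan is to reduce the statement to a standard stratification theorem for algebraic varieties and then pass to $F$-points. By \Cref{lem:AlgVar} there is a closed algebraic subvariety $\tilde{\mathbf O}_c \subset \mathbf{B}x\mathbf{H}$ with $\tilde\cO_c = \tilde{\mathbf O}_c(F)$. I first observe that $\cO$ is open in $(\mathbf{B}x\mathbf{H})(F)$: every $B\times H$-orbit inside the algebraic double coset has the same algebraic dimension as $\mathbf{B}x\mathbf{H}$, so the distinct $F$-rational $B\times H$-orbits form pairwise disjoint open subsets of $(\mathbf{B}x\mathbf{H})(F)$. Hence $\cO_c = \cO \cap \tilde{\mathbf O}_c(F)$ is open in $\tilde\cO_c$.

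Next I invoke the standard fact that any algebraic variety $\mathbf Y$ defined over $F$ (in characteristic zero) admits a finite filtration $\emptyset = \mathbf Y_{-1}\subset \mathbf Y_0 \subset \cdots \subset \mathbf Y_N = \mathbf Y$ by closed $F$-subvarieties whose successive differences $\mathbf Y_i \setminus \mathbf Y_{i-1}$ are smooth and locally closed, obtained by iteratively removing the singular locus; the strata can be further arranged so that the union of those of dimension $\geq d$ is open for each $d$. Applied to $\tilde{\mathbf O}_c$ and passed to $F$-points via the $F$-analytic implicit function theorem (under which the $F$-points of a smooth $F$-variety of algebraic dimension $d$ form an $F$-analytic manifold of dimension $d$), this yields a stratification of $\tilde\cO_c$ into smooth $F$-analytic locally closed submanifolds. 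Intersecting each piece with the open subset $\cO$ produces the required $\{\cO_c^i\}$, and the ordering property that $\bigcup_{i\leq i_0}\cO_c^i$ is open in $\cO_c$ is inherited from the corresponding algebraic statement.

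For the dimension claim, suppose $\cO_c \neq \cO$ and pick $y \in \cO\setminus \cO_c$. Then $y\in(\mathbf{B}x\mathbf{H})(F)\setminus \tilde{\mathbf O}_c(F)$, so $\tilde{\mathbf O}_c$ is a proper closed subvariety of $\mathbf{B}x\mathbf{H}$. Since $\cO$ is open in $(\mathbf{B}x\mathbf{H})(F)$, it is Zariski dense in the union of those irreducible components of $\mathbf{B}x\mathbf{H}$ that it meets, and all these components have algebraic dimension equal to $\dim \cO$. It follows that each irreducible component of $\tilde{\mathbf O}_c$ meeting $\cO$ has algebraic dimension strictly smaller than $\dim \cO$, and therefore each stratum $\cO_c^i$ has $F$-analytic dimension strictly smaller than $\dim \cO$ (which I take to be the intended right-hand side of the inequality).

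The only real subtlety is matching algebraic and $F$-analytic dimensions at the level of $F$-points and ensuring that the filtration can be chosen defined over $F$. Both are handled automatically: the implicit function theorem gives the dimension match on smooth loci, and the singular locus of an $F$-variety is itself $F$-rational, so the iterative construction stays within $F$-subvarieties throughout.
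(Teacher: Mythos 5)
Your proposal follows exactly the route the paper intends: the corollary is stated without proof as a direct consequence of \Cref{lem:AlgVar}, namely stratify the closed $F$-subvariety $\tilde{\mathbf O}_c\subset \mathbf{B}x\mathbf{H}$ by iterated singular loci, pass to $F$-points, and intersect with the open subset $\cO\subset(\mathbf{B}x\mathbf{H})(F)$. Part (1) and the openness of initial unions are handled correctly, and your reading of part (2) (with $\dim\cO$ on the right-hand side) matches how the corollary is used in Case 3 of the proof of \Cref{thm: main theorem}.

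The one step that is not fully justified is the sentence ``It follows that each irreducible component of $\tilde{\mathbf O}_c$ meeting $\cO$ has algebraic dimension strictly smaller than $\dim\cO$.'' Properness of the closed subvariety $\tilde{\mathbf O}_c$ in $\mathbf{B}x\mathbf{H}$ only tells you that \emph{some} component of $\mathbf{B}x\mathbf{H}$ is not contained in it. When $\mathbf{H}$ is disconnected, $\mathbf{B}x\mathbf{H}$ may be reducible and the single $F$-rational coset $\cO$ may meet several of its components; a priori $\tilde{\mathbf O}_c$ could contain one such component entirely (forcing a full-dimensional stratum inside $\cO_c$) while missing part of another (so that $\cO_c\neq\cO$). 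To close this, observe that for $y=tu_0xh$ with $t\in T$, $u_0\in U$, $h\in H$, the defining condition of $\cO_c$ reduces to $\psi^{t^{-1}}|_{H^x\cap U}=\chi^x|_{H^x\cap U}$, i.e.\ it depends only on the torus coordinate $t$ and not on $u_0$ or $h$; hence $\cO_c=T_cUxH$ for a subset $T_c\subset T$ which is the set of $F$-points of a closed subvariety of $\mathbf{T}$. If $\cO_c\neq\cO$ then $T_c\neq T$, so by Zariski density of $T$ in $\mathbf{T}$ this subvariety is proper, and the dimension drop propagates uniformly to every component. With this supplement (or under the standing assumption that $\mathbf{H}$ is connected, where $\mathbf{B}x\mathbf{H}$ is irreducible and your argument applies verbatim) the proof is complete.
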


\begin{proof}[Proof of Theorem \ref{thm: main theorem}]
 Suppose that there exists a non-zero right $(U,\psi)$-equivariant and
 left $(H,\chi)$-equivariant distribution $\xi$  supported on $Z$
 such that $WF(\xi) \subset G \times \NN_{\fg^*}$.
 We decompose $G$ into $B\times H$-double cosets and prove the required vanishing coset by coset. For a $B\times H$-double coset $\cO\subset G$
 define $\cO_s:=\cO \setminus \cO_c$
 and stratify $\cO_{c}$\RamiA{, using Corollary \ref{cor:strat},}  to a union of smooth locally closed $F$-analytic subvarieties $\cO_c^i$.
The collection $$\{\cO_c^i \, | \, \cO\text{ is a }B\times H\text{-double coset}\}\cup \{\cO_s \, | \,\cO\text{ is a }B\times H\text{-double coset}\}$$ is a stratification of $G$. Order this collection to a sequence $\{S_i\}_{i=1}^N$ of smooth locally closed $F$-analytic submanifolds of $G$
such that $U_k:=\bigcup_{i=1}^k S_i$ is open in $G$ for any $1\leq k \leq N$.
Let $k$ be the maximal integer such that $\xi|_{U_{k-1}}=0$. Suppose $k\leq N$ and let $\eta:=\xi|_{U_{k}}$. Note that $\mathrm{Supp} (\eta) \subset S_k$.
We will now show that $\eta=0$, which leads to a contradiction.
\begin{enumerate}[{Case} 1.]
\item $S_k = \cO_s$ for some orbit $\cO$. Then  $\eta = 0$ by Theorem \ref{thm:BGKZ}
since $\eta$ is \\$(U \times H, \psi \times \chi)$-equivariant.

\item $S_k \subset \cO=\cO_c$ for some orbit $\cO$. Then $S_k\subset G \setminus Z$ and $\eta = 0$ by the conditions.
\item $S_{k}\subset \cO_c\subsetneq\cO$ for some orbit $\cO$.
In this case\RamiA{, by Corollary \ref{cor:strat},}
 $\dim \RamiA{S_k}< \dim \cO$ and thus $$CN_{S_{k},x}^G \supsetneq CN_{\cO,x}^G.$$
By Lemma \ref{lem:WF} we have, for any  $x \in S_k$, $$ WF_x(\eta)\subset CN_{\cO,x}^G \text{ and thus }CN_{S_k,x}^G \nsubseteq WF_x(\eta).$$
 By Corollary \ref{cor:WCI} this implies $\eta=0$.
 \end{enumerate}
\end{proof}


\section{Proof of \DimaA{Corollaries \ref{cor: galois case} and \ref{cor: group case}}} \label{section: proof of cors}


Let $\mathbf{U}'$
denote the derived group of $\mathbf{U}$.

\begin{lem}

\label{lem: stabilizer-group case}
Let
$\overline{W}$ be the Weyl group of $\mathbf{G}$. Let $\overline{w} \in \overline{W}$ and let $w\in G$ be \DimaB{ a representative of $\overline{w}$}. Suppose that
$w\mathbf{U}w^{-1} \cap \mathbf{U} \subset \mathbf{U}'$. Then $\overline{w}$ is  the longest  element in $\overline{W}$.
\end{lem}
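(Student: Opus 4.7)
The plan is to analyze $w\mathbf U w^{-1}\cap \mathbf U$ through the root subgroup decomposition of $\mathbf U$, using the fact that the abelianization $\mathbf U/\mathbf U'$ detects precisely the simple root directions.

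Concretely, fix a maximal $F$-split torus $\mathbf T\subset \mathbf B$, let $\Phi^+$ denote the set of positive (relative) roots with respect to $(\mathbf T,\mathbf B)$, and let $\Delta\subset \Phi^+$ be the simple roots; for each $\alpha\in \Phi^+$ let $\mathbf U_\alpha$ be the corresponding (relative) root subgroup. Using $w\mathbf U_\alpha w^{-1}=\mathbf U_{\overline w\alpha}$ together with the ordered-product decomposition of $\mathbf U$, a standard calculation gives
\[
w\mathbf U w^{-1}\cap \mathbf U \;=\; \prod_{\substack{\alpha\in \Phi^+\\ \overline w^{-1}\alpha\in \Phi^+}}\mathbf U_\alpha .
\]
On the other hand, $\mathbf U'$ is generated by $\{\mathbf U_\alpha:\alpha\in \Phi^+\setminus \Delta\}$, so that for any simple $\alpha\in \Delta$ the image of $\mathbf U_\alpha$ in $\mathbf U/\mathbf U'$ is non-zero.

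Combining these two facts, the hypothesis $w\mathbf U w^{-1}\cap \mathbf U\subset \mathbf U'$ forces $\overline w^{-1}\alpha \notin \Phi^+$ for every simple $\alpha\in \Delta$, i.e.\ $\overline w^{-1}$ sends every simple positive root into $-\Phi^+$. The unique element of $\overline W$ with this property is the longest element $w_0$, so $\overline w^{-1}=w_0$; since $w_0^{-1}=w_0$ this gives $\overline w=w_0$, as claimed.

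The only genuine subtlety is that $\mathbf G$ is assumed merely quasi-split, so one must invoke the root subgroup decomposition and the description of $\mathbf U'$ in terms of non-simple root subgroups in the relative root system rather than in a split one; both facts are routine consequences of Borel--Tits structure theory and continue to hold when some positive roots are multipliable (one groups the contributions of $\alpha$ and $2\alpha$ into a single $\mathbf U_\alpha$), so I do not expect any real obstacle there.
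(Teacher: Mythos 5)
Your proof is correct and follows essentially the same route as the paper's: both reduce the hypothesis, via the root (sub)group decomposition of $\mathbf U$ and the fact that $\mathbf U'$ misses the simple root directions, to the statement that $\overline w^{-1}$ sends every simple root to a negative root, and then conclude $\overline w=\overline w_0$. The only differences are cosmetic — the paper works on the level of Lie algebras rather than root subgroups, and finishes by showing $\overline w_0\overline w^{-1}$ permutes $\Delta$ and hence is trivial, instead of directly quoting the uniqueness of the element negating all simple roots.
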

\begin{proof}
\Rami{
Let $\gotU$ be the Lie algebra of $\mathbf{U}$. On the level of Lie algebras the condition
$wUw^{-1} \cap U \subset U'$ means that \Dima{$(Ad(w)\gotU) \cap \gotU \subset \gotU '$. The algebra $\gotU$ can
be decomposed as $$\gotU = \bigoplus_{\alpha \in \Phi^+} \gotG_{\alpha}.$$
It is easy to see that
$$(Ad(w)\gotU) \cap \gotU = \sum_{\alpha \in \Phi^+, \overline{w}^{-1}(\al) \in \Phi^+ }
\gotG_{\alpha}. $$
Let $\Delta \subset \Phi^+$ be the
set of simple roots in $\Phi^+$.
Then from the condition of the lemma
we obtain that 
$\overline{w}^{-1}(\Delta) \subset \Phi^-$, and as a consequence
$\overline{w}^{-1}(\Phi^+) \subset \Phi^-$.
Let $\overline{w}_0$ be the longest  element in $\overline{W}$. Then
$\overline{w}_0\overline{w}^{-1}(\Phi^+) \subset \Phi^+$.
Since $\Phi^+$ is a finite set and $\overline{w}_0\overline{w}^{-1}$ acts by an invertible
linear transformation, we get $\overline{w}_0\overline{w}^{-1}(\Phi^+) = \Phi^+$.  
Since  simple roots are the indecomposable ones, it follows that
$\overline{w}_0\overline{w}^{-1}(\Delta) = \Delta$. \Rami{This implies that $\overline{w}_0\overline{w}^{-1} = 1$
(see e.g. \cite[\S 10.3]{Hum}),}} and thus
$\overline{w}_0=\overline{w}$.}
\end{proof}

\begin{cor}\label{cor: orbit cond}
Let $\mathbf{H}$ be a reductive group. Assume $\mathbf{G} = \mathbf{H} \times \mathbf{H}$ and let $\Delta\mathbf{H}\subset \mathbf{G}$ be the diagonal copy of $\mathbf{H}$.
\DimaA{Denote} $\mathbf{X}=\mathbf{G}/\Delta\mathbf{H}$ and
let  $x\in X$ be such that $\mathbf{U}_x \subset \mathbf{U}'$. Then the orbit $\mathbf{B}x$ is open in $\mathbf{X}$.
\end{cor}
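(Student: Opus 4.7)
The plan is to identify $\mathbf{X}$ with $\mathbf{H}$, compute the stabilizer $\mathbf{U}_x$ explicitly, and then use the Bruhat decomposition to reduce the hypothesis $\mathbf{U}_x\subset\mathbf{U}'$ to the setting of Lemma~\ref{lem: stabilizer-group case}. First I will identify $\mathbf{X}=(\mathbf{H}\times\mathbf{H})/\Delta\mathbf{H}$ with $\mathbf{H}$ via $(h_1,h_2)\Delta\mathbf{H}\mapsto h_1 h_2^{-1}$, under which $\mathbf{G}=\mathbf{H}\times\mathbf{H}$ acts on $\mathbf{H}$ by $(h_1,h_2)\cdot h = h_1 h h_2^{-1}$. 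After conjugating $\mathbf{B}$ in $\mathbf{G}$ if necessary, I may assume $\mathbf{B}=\mathbf{B}_0\times\mathbf{B}_0$ for some Borel $\mathbf{B}_0\subset\mathbf{H}$ with unipotent radical $\mathbf{U}_0$, so that $\mathbf{U}=\mathbf{U}_0\times\mathbf{U}_0$ and $\mathbf{U}'=\mathbf{U}_0'\times\mathbf{U}_0'$. With this identification, $\mathbf{B}$-orbits on $\mathbf{X}$ correspond to the Bruhat cells $\mathbf{B}_0 w\mathbf{B}_0\subset\mathbf{H}$, and $\mathbf{B}x$ is open precisely when the Weyl element corresponding to $x$ is the longest one.

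Next I would compute $\mathbf{U}_x$ directly: for $h\in\mathbf{H}$ representing $x$,
\[
\mathbf{U}_x=\{(huh^{-1},u)\,:\, u\in\mathbf{U}_0,\ huh^{-1}\in\mathbf{U}_0\},
\]
and projection to the first factor gives an isomorphism $\mathbf{U}_x\cong h\mathbf{U}_0 h^{-1}\cap\mathbf{U}_0$. The hypothesis $\mathbf{U}_x\subset\mathbf{U}'=\mathbf{U}_0'\times\mathbf{U}_0'$ then forces $h\mathbf{U}_0 h^{-1}\cap\mathbf{U}_0\subset\mathbf{U}_0'$.

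Finally, I will write $h=b_1 w b_2$ via the Bruhat decomposition of $\mathbf{H}$. Since $\mathbf{U}_0'$ is normal in $\mathbf{B}_0$, conjugation by $b_1$ preserves both $\mathbf{U}_0$ and $\mathbf{U}_0'$, and the identity
\[
h\mathbf{U}_0 h^{-1}\cap\mathbf{U}_0=b_1(w\mathbf{U}_0 w^{-1}\cap\mathbf{U}_0)b_1^{-1}
\]
turns the previous containment into $w\mathbf{U}_0 w^{-1}\cap\mathbf{U}_0\subset\mathbf{U}_0'$. Applying Lemma~\ref{lem: stabilizer-group case} to $\mathbf{H}$ then forces $\overline{w}$ to be the longest element of the Weyl group of $\mathbf{H}$, so $h$ sits in the big Bruhat cell of $\mathbf{H}$ and $\mathbf{B}x$ is open in $\mathbf{X}$. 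The whole argument is essentially bookkeeping with the Bruhat picture; the only step with any content is the stabilizer identification, which is a direct computation, so I do not anticipate any real obstacle.
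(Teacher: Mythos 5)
Your proposal is correct and follows essentially the same route as the paper: identify $\mathbf{X}$ with $\mathbf{H}$, compute $\mathbf{U}_x\cong h\mathbf{U}_0h^{-1}\cap\mathbf{U}_0$, and invoke Lemma~\ref{lem: stabilizer-group case}. The only cosmetic difference is that you reduce to a Weyl representative by factoring $h=b_1wb_2$ and using normality of $\mathbf{U}_0$ and $\mathbf{U}_0'$ in $\mathbf{B}_0$, whereas the paper phrases the same reduction as $\mathbf{B}$-invariance of the locus $\{x:\mathbf{U}_x\subset\mathbf{U}'\}$.
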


\begin{proof}
We can identify $\mathbf{X}$ with $\mathbf{H}$ using the projection on the first coordinate.
We can assume that $\mathbf{B}=\mathbf{B}_{\mathbf{H}}\times \mathbf{B}_{\mathbf{H}}$ where $\mathbf{B}_{\mathbf{H}}$ is a Borel subgroup of $\mathbf{H}$.  Let $\overline{W}$ be the Weyl group of $\mathbf{H}$ and $W$ be a set of its representatives. By the Bruhat decomposition,
$$ \mathbf{H}= \bigsqcup_{w \in W} \mathbf{B}_{\mathbf{H}}w \mathbf{B}_{\mathbf{H}}$$
It is well-known that the only open $\mathbf{B}_{\mathbf{H}}\times \mathbf{B}_{\mathbf{H}}$ orbit in $\mathbf{H}$ is $\mathbf{B}_{\mathbf{H}}w_0 \mathbf{B}_{\mathbf{H}}$, where $w_0 \in W$
is the representative \Dima{of} the longest Weyl  element. Let $ w \in W$. Let $\mathbf{U_H}$ be the nilradical of $\mathbf{B_H}$. Then
$$\mathbf{U}  _w =  \{ (u_1,u_2)| u_1wu_2 = w , \; \; u_1,u_2 \in \mathbf{U_{H}}  \}, $$
and we see that for a pair $(u_1,u_2) \in \mathbf{U} _w$ we have
$u_1 = w u_2 w^{-1} \in w\mathbf{U_{H}} w^{-1}$. Therefore,
$$\mathbf{U} _w \cong \mathbf{U_{H}}  \cap w\mathbf{U_{H}} w^{-1}.$$
Let $$R = \{ x \in \mathbf{X}\ | \mathbf{U} _x \subset \mathbf{U} '  \}  =
\{ x \in\mathbf{H} | \mathbf{U_{H}}   \cap x \mathbf{U_{H}}  x^{-1} \subset  \mathbf{U_{H}}'=[\mathbf{U_{H}},\mathbf{U_{H}}] \},$$
 and let $\mathbf{R}$ be the corresponding algebraic variety. Since $ \mathbf{U}  $ and $ \mathbf{U}  '$ are normal in $ \mathbf{B}  $, we obtain that
 $ \mathbf{R}  $ is $ \mathbf{B}   $-invariant.
The corollary 
follows now from
 Lemma \ref{lem: stabilizer-group case}.
\end{proof}

\begin{cor}\label{cor: orbit cond Galois case}
\DimaA{Let $\mathbf{H}\subset \mathbf{G}$ be a subgroup of Galois type. Then
for every non-open $B$-orbit $\cO \subset G/H$ there exists $y \in \cO$ such that $\psi(U_y) \ne 1$.}
\end{cor}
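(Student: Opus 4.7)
The plan is to reduce to the group case, \Cref{cor: orbit cond}, by base change along the field extension $E/F$ supplied by the Galois type hypothesis, and then to descend back to $F$ using the non-degeneracy of $\psi$.

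Fix any $y \in \cO$. The Galois type isomorphism $(\mathbf{G}_E, \mathbf{H}_E) \simeq (\mathbf{H}_E \times \mathbf{H}_E, \Delta \mathbf{H}_E)$ identifies $\mathbf{G}_E/\mathbf{H}_E$ with $\mathbf{H}_E$ and sends $\mathbf{B}_E$ to a product of Borels $\mathbf{B}_1 \times \mathbf{B}_2$ of $\mathbf{H}_E$. Since openness of a locally closed subscheme is reflected by faithfully flat base change, $\mathbf{B}_E \cdot y$ is not open in $\mathbf{G}_E/\mathbf{H}_E$, and \Cref{cor: orbit cond} applied over $E$ yields $\mathbf{U}_{y,E} \not\subset \mathbf{U}_E'$. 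Inspecting the proof via \Cref{lem: stabilizer-group case}: if $\mathbf{B}_E y$ lies in the Bruhat cell indexed by $w$, then the image of $\mathbf{U}_{y,E}$ in $(\mathbf{U}/\mathbf{U}')_E$ equals $\bigoplus_{\alpha \in \Delta \cap \Phi_w^+} \mathfrak{g}_\alpha$, where $\Phi_w^+ = \{\alpha \in \Phi^+ : w^{-1}\alpha \in \Phi^+\}$; this index set is non-empty since $w \ne w_0$, so we may fix a simple root $\alpha_0 \in \Delta \cap \Phi_w^+$.

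Next I would descend. Because $\mathbf{U}_y$ is defined over $F$, so is its image $V \subset \mathbf{U}/\mathbf{U}'$; by Galois invariance, $V_{\bar F}$ contains the full orbit sum $V_{[\alpha_0]} := \bigoplus_{\alpha' \in [\alpha_0]} \mathfrak{g}_{\alpha'}$, which is an $F$-irreducible summand of $\mathbf{U}/\mathbf{U}'$. By \Cref{lem:char}, write $\psi = \beta_0 \circ \psi'$, where $\psi' : \mathbf{U} \to \mathbb{G}_a$ is an algebraic $F$-homomorphism and $\beta_0$ is a non-trivial additive character of $F$. Non-degeneracy of $\psi$ amounts to $\psi'$ being non-trivial on each $F$-irreducible summand of $\mathbf{U}/\mathbf{U}'$, in particular on $V_{[\alpha_0]}$. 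Therefore $\psi'|_{\mathbf{U}_y}$ is a non-trivial $F$-morphism $\mathbf{U}_y \to \mathbb{G}_a$; since $\mathbf{U}_y$ is connected unipotent and $H^1(F, \mathbb{G}_a) = 0$, the induced map $U_y \to F$ is surjective, so $\psi(U_y) = \beta_0(F) \ne \{1\}$.

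The main obstacle is the Galois-descent bookkeeping: one must carefully translate the non-degeneracy condition on $\psi$ into non-triviality of $\psi'$ on each Galois-orbit summand of $\mathbf{U}/\mathbf{U}'$, and verify that the single simple root line detected inside $\mathbf{U}_{y,E}$ propagates, by $F$-rationality of $\mathbf{U}_y$, to a complete $F$-irreducible summand.
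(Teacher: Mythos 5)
Your reduction to \Cref{cor: orbit cond} by base change to $E$ is exactly what the paper does, but the descent step contains a genuine gap, and in fact the statement you are trying to prove is false. You fix an arbitrary $y\in\cO$ at the outset and aim to show $\psi(U_y)\neq 1$ for that $y$; the corollary only asserts the existence of \emph{some} such $y$ in the orbit, and the "for all $y$" version fails. Concretely, take $\mathbf{G}=\mathbf{H}\times\mathbf{H}$ with $\mathbf{H}=\mathbf{SL}_2$, $y=e$ (so $\cO=B_1\cdot e\cdot B_2$ is the non-open Bruhat cell), and $\psi=\psi_1\times\psi_1^{-1}$, which is non-degenerate. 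Then $U_e=\{(u,u)\}$ and $\psi(u,u)=\psi_1(u)\psi_1(u)^{-1}=1$, even though $\mathbf{U}_e\not\subset\mathbf{U}'$. The precise point where your argument breaks is the claim that the image $V$ of $\mathbf{U}_y$ in $\mathbf{U}/\mathbf{U}'$ is a sum of root lines (and hence contains a full Galois orbit $\bigoplus_{\alpha'\in[\alpha_0]}\mathfrak{g}_{\alpha'}$). In the group case the stabilizer is $\{(u,w^{-1}uw)\}$, so its image in $(\mathbf{U}_1/\mathbf{U}_1')\times(\mathbf{U}_2/\mathbf{U}_2')$ is a \emph{graph}-type subspace: its projections to various simple root lines are nonzero, but it need not contain any root line, and it is not $\mathbf{T}$-stable. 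A non-degenerate $\psi'$ (nonzero on each irreducible summand) can still vanish identically on such a skew subspace, as the example shows.

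The paper circumvents this by exploiting the freedom in the choice of $y$: having established $\mathbf{U}_x\not\subset\mathbf{U}'$ (your first paragraph, which is correct modulo the description of the image), one observes that \emph{some} non-degenerate character $\varphi$ of $U$ satisfies $\varphi(U_x)\neq 1$, that the set of characters non-trivial on $U_x$ is a non-empty Zariski-open (hence $l$-space dense) subset of the character variety of $\mathbf{U}/\mathbf{U}'$, and that it therefore meets the open $B$-orbit of $\psi$; transporting back by the corresponding element $b\in B$ produces $y=bx\in\cO$ with $\psi(U_y)\neq 1$. If you want to salvage your approach, you must incorporate this variation of $y$ within $\cO$ (equivalently, of $\psi$ within its $T$-orbit); no purely Galois-theoretic argument at a fixed $y$ can work.
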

\begin{proof}
Let $\cO \subset \DimaA{G/H}$ be a non-open $B$-orbit and $x \in \cO$.
\DimaB{Then the map $\mathbf{B} \to \mathbf{G}$ given by the action on $x$ is not submersive and thus
 $\mathbf{B}x$ is not Zariski open in $\DimaA{\mathbf{\mathbf{G}/\mathbf{H}}}$. By Corollary \ref{cor: orbit cond} this implies
$\mathbf{U}_x \not \subset \mathbf{U}'$.} Thus, there exists a non-degenerate character $\varphi$ of $U$ such that
$\varphi(U_x) \ne 1$. For a fixed $x \in \cO$, the set of characters $\varphi'$ of $U$ such that
  $\varphi'(U_x) \ne 1$ is Zariski-open, thus dense in the $l$-space topology
and thus intersects the $B$-orbit of $\psi$. Thus there exists $y\in Bx=\cO$ such that
$\psi(U_y) \ne 1$.
\end{proof}
\DimaA{
\begin{proof}[Proof of Corollary \ref{cor: galois case}]
By Theorem \ref{thm: main theorem} it is enough to show that $S\subset Z$. Let $\cO \subset S$ be a $B\times H$ double coset. Corollary \ref{cor: orbit cond Galois case} implies that there exists $x \in \cO$ such that $\psi|_{U \cap H^x}\neq 1$. Since $H^x$ is reductive and $U$ is unipotent, we have $\chi^x|_{U \cap H^x}= 1$, and thus $\cO \subset Z$.
\end{proof}
 \begin{proof}[Proof of Corollary \ref{cor: group case}]
 Define $\mathbf{\tilde G} = \mathbf{G} \times \mathbf{G}$,
 $\mathbf{\tilde H} = \Delta(\mathbf{G}) \subset \mathbf{\tilde G}$ and $\mathbf{\tilde B}=\mathbf{B}\times \mathbf{B}$.  The non-degenerate characters $\psi_1,\psi_2$ define a non-degenerate character of the nilradical $\tilde U$ of $\tilde B$.
Note that
 $\mathbf{\tilde H} \subset \mathbf{\tilde G}$ is a subgroup of Galois type  and that $\tilde G/\tilde H $ is naturally isomorphic to $G$. Let $\eta$ be the pull-back of $\xi$ to $\tilde G$ under the projection $\tilde G \to \tilde G/\tilde H \cong G$. Then we have $\Supp \eta \subset \tilde S$, where $\tilde S$ is the union of all non-open $\tilde B\times \tilde H$-double cosets in $\tilde G$.
Also, by Corollary \ref{cor:SubPull} we have $WF(\eta) \subset \tilde G \times \NN_{\tilde \fg^*}$.
By Corollary \ref{cor: galois case} we obtain $\eta=0$ and thus $\xi=0$.
 \end{proof}
}
\begin{rem}
\DimaA{Corollary \ref{cor: galois case} can not be generalized literally to arbitrary symmetric pairs. The reason is that neither can Corollary \ref{cor: orbit cond Galois case}.} For example consider  the pair $\mathbf{G}=\mathbf{GL}_{2n},\mathbf{H}=\mathbf{GL_{n}}\times \mathbf{GL_{n}}$, where the involution is conjugation by the diagonal matrix with first $n$ entries equal to $1$ and others \Dima{equal to} $-1$. Let $x$  be the coset  of the permutation matrix given by the product of transpositions $$\prod_{i=0}^{\lfloor (n-1)/2 \rfloor}(2i+1,2n-2i),$$ and let  $\mathbf{B}$ consist of upper-triangular matrices. Then $\mathbf{U}_x\subset \mathbf{U}'$, while $\mathbf{B}x$ is of middle dimension in $\mathbf{G} /\mathbf{H}$. \DimaA{It can be shown that there exists a $(U,\psi)$-left equivariant, $H$-right invariant distribution $\xi$ on $G$ supported in $BxH$ and satisfying $WF(\xi)\subset G \times \NN_{\fg^*}$.}

However, Corollary \ref{cor:Bessel}\eqref{it:Gal} might hold for any spherical subgroup $\mathbf{H}$. In fact, this is the case over the archimedean fields, see \cite[Corollary B]{AG}.
\end{rem}

\appendix
\DimaC{
\section{Proof of Lemma \ref{lem:char}} \label{App:char}

\begin{lem}\label{lem:exp}
Let 
$\fv$ be the ($F$-points of) the Lie algebra of $\mathbf{V}$. Then the exponential map $\exp: \fv \to V$ maps the commutant $[\fv,\fv]$ of $\fv$ onto the subgroup $[V,V]$ of $V$ generated (set-theoretically) by all the commutators in $V$.
\end{lem}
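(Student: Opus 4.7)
The plan is to use the Baker-Campbell-Hausdorff formula, which terminates on a nilpotent Lie algebra, together with an induction on the nilpotency class of $\fv$. Recall first that for a unipotent algebraic group in characteristic zero, $\exp : \fv \to V$ is a bijective polynomial map, and for any $X,Y \in \fv$ we have $\exp(X)\exp(Y) = \exp(\mathrm{BCH}(X,Y))$, where $\mathrm{BCH}(X,Y)$ is a finite sum of iterated brackets beginning with $X + Y + \tfrac12[X,Y]$.

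The inclusion $[V,V] \subseteq \exp([\fv,\fv])$ is the easy direction. Since $[\fv,\fv]$ is a Lie subalgebra, the BCH formula shows that $\exp([\fv,\fv])$ is a subgroup of $V$. Expanding $\log([\exp X,\exp Y])$ by BCH gives $[X,Y] + (\text{iterated brackets of length } \ge 3)$, and every term lies in $[\fv,\fv]$. Hence each group-commutator of elements of $V$ already lies in $\exp([\fv,\fv])$, and so does the subgroup $[V,V]$ they generate.

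For the reverse inclusion I would induct on the nilpotency class $n$ of $\fv$. The case $n=1$ is trivial. For the step, let $\fz := C^n\fv$ be the last nonzero term of the lower central series; it is central. Any element of $\fz$ can be written as $W = \sum_i [X_i,Y_i]$ with $Y_i \in C^{n-1}\fv$, and since $[X_i,[X_i,Y_i]]$ and $[Y_i,[X_i,Y_i]]$ lie in $C^{n+1}\fv = 0$, the BCH formula collapses to $[\exp X_i,\exp Y_i] = \exp([X_i,Y_i])$. Using that $\fz$ is central (so $\exp$ is additive on it) we get
\[
\exp(W) \;=\; \prod_i \exp([X_i,Y_i]) \;=\; \prod_i [\exp X_i,\exp Y_i] \;\in\; [V,V],
\]
which shows $\exp(\fz) \subseteq [V,V]$. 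Now let $\bar V := V/\exp(\fz)$; its Lie algebra is $\bar\fv = \fv/\fz$, which has strictly smaller nilpotency class. By the inductive hypothesis $\exp([\bar\fv,\bar\fv]) \subseteq [\bar V,\bar V]$. For any $W \in [\fv,\fv]$ the image $\bar W$ lies in $[\bar\fv,\bar\fv]$, so the projection of $\exp(W)$ lies in $[\bar V,\bar V]$. Combined with $\ker(V \twoheadrightarrow \bar V) = \exp(\fz) \subseteq [V,V]$ and the surjection $[V,V] \twoheadrightarrow [\bar V,\bar V]$, this gives $\exp(W) \in [V,V]$.

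The only delicate point is bookkeeping with the BCH series: verifying that the higher BCH corrections to $[\exp X_i,\exp Y_i]$ all vanish in the centermost stratum $\fz$, and that the identification of $\bar\fv$ with the Lie algebra of $\bar V$ together with the behavior of $\exp$ modulo the central subgroup is consistent. These are standard but must be checked to keep the induction clean.
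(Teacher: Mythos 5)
Your proof is correct, and it reorganizes the induction differently from the paper. Both arguments run on the same engine (Baker--Campbell--Hausdorff plus nilpotency, with the easy inclusion $[V,V]\subseteq\exp([\fv,\fv])$ handled identically), but the paper proves the hard inclusion by a \emph{descending} induction on the lower central series, showing $\exp(\fv_i)\subseteq[V,V]$ entirely inside the fixed group $V$: the BCH error terms $B,C$ live one step deeper in the filtration, so the identity $\exp([X,Y])=e^C(e^Xe^Ye^{-X}e^{-Y})e^{-B}$ places $\exp([X,Y])$ in $[V,V]$ by the inductive hypothesis. You instead do an \emph{ascending} induction on the nilpotency class, first showing $\exp(C^n\fv)\subseteq[V,V]$ by arranging for the BCH corrections to vanish outright in the last stratum, and then passing to the central quotient $V/\exp(C^n\fv)$. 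Your route makes the error-term bookkeeping disappear, but at the price of the setup you yourself flag: one must check that $\exp(C^n\fv)$ is a (central, hence normal) subgroup, that the quotient is again the $F$-points of a unipotent group with Lie algebra $\fv/C^n\fv$ and compatible exponential, and that $[V,V]$ surjects onto the commutator subgroup of the quotient. All of these are standard (centrality of $C^n\fv$ makes $\exp$ additive there and makes the quotient construction painless), so there is no gap; the paper's version simply avoids forming quotients altogether, which is why it can afford the slightly more delicate manipulation with $B$ and $C$.
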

\begin{proof}
Let $\fv_i$  be the sequence of subalgebras of $\fv$ defined by $\fv_0:=\fv, \, \fv_{i+1}:=[\fv,\fv_i]$.
The Baker-Campbell-Hausdorff formula implies that for any $X\in \fv$ and $Y\in \fv_i$ there exist $A,B\in \fv_{i+2}$ and $C \in \fv_{i+1}$ such that
\begin{align}
\log(e^Xe^Y) &= X+Y+\frac{1}{2}[X,Y]+A, \label{=BCH}\\
\log(e^Xe^Ye^{-X}e^{-Y}) &= [X,Y]+B \label{=cm}\\
e^{X+Y}&=e^{C}e^Xe^Y. \label{=ExpSum}
\end{align}
By (\ref{=BCH},\ref{=cm})   we have $[V,V]\subset\exp([\fv,\fv])$.
To prove the opposite inclusion we prove by descending induction on $i$ that $\exp(\fv_i)\subset [V,V]$ for any $i>0$. Since $\exp(\fv_i)$ is a group, it is enough to show that for any $X\in \fv$ and $Y\in \fv_{i-1}$ we have $\exp([X,Y])\in [V,V]$. Let $B$ be as in  \eqref{=cm}, and $C$ be  as in  \eqref{=ExpSum} applied to $\log(e^Xe^Ye^{-X}e^{-Y})$ and $-B$. Then $B,C\in \fv_{i+1}$, the induction hypothesis implies that $e^B,e^C\in [V,V]$ and thus
$$\exp([X,Y])=\exp(\log(e^Xe^Ye^{-X}e^{-Y})-B)= e^C(e^Xe^Ye^{-X}e^{-Y})e^{-B}\in [V,V].$$
\end{proof}

\begin{cor}
Let $\mathbf{V}/[\mathbf{V},\mathbf{V}]$ denote the abelization of $\mathbf V$. Then
the natural map $V/[V,V] \to (\mathbf{V}/[\mathbf{V},\mathbf{V}])(F)$ is an isomorphism.
\end{cor}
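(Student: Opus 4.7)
The plan is to combine Lemma \ref{lem:exp} with two standard facts about unipotent algebraic groups in characteristic zero: (a) the exponential map $\exp : \fv \to V$ is a bijection of $F$-points; and (b) the quotient $\mathbf{W} := \mathbf{V}/[\mathbf{V},\mathbf{V}]$, being a commutative unipotent group, is (isomorphic to) a vector group, so that $\mathbf{W}(F)$ is canonically identified with its Lie algebra $\fv/[\fv,\fv]$, and under this identification the exponential map on $W$ is the identity.

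First I would invoke the naturality of $\exp$ with respect to the quotient morphism $\pi : \mathbf{V} \to \mathbf{W}$. This gives a commutative square in which the horizontal arrows are $\exp$ and the vertical arrows are $\pi$ (on Lie algebras and on $F$-points respectively). Under the identifications above, the morphism $\pi_F : V \to \mathbf{W}(F)$ becomes $\log$ followed by the canonical projection $\fv \to \fv/[\fv,\fv]$. Since $\log$ is a bijection and the projection is surjective, $\pi_F$ is surjective.

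Next I would identify the kernel of $\pi_F$. An element $v\in V$ lies in $\ker\pi_F$ exactly when $\log(v) \in [\fv,\fv]$, i.e.\ when $v \in \exp([\fv,\fv])$. By Lemma \ref{lem:exp}, this set equals $[V,V]$. Hence $\pi_F$ induces the desired isomorphism $V/[V,V] \xrightarrow{\sim} (\mathbf{V}/[\mathbf{V},\mathbf{V}])(F)$.

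I expect no essential difficulty: the nontrivial content has already been supplied by Lemma \ref{lem:exp}, and the remaining ingredients are the well-known dictionary between unipotent algebraic groups in characteristic zero and their Lie algebras, all of which rests on the same Baker--Campbell--Hausdorff considerations used in the proof of Lemma \ref{lem:exp}.
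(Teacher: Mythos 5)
Your proof is correct, but it takes a genuinely different route from the paper's. The paper first upgrades Lemma \ref{lem:exp} to the identity $[V,V]=[\mathbf V,\mathbf V](F)$: it uses the Baker--Campbell--Hausdorff relations to see that $\mathbf V/\exp([\fv,\fv])$ is abelian, hence $[\mathbf V,\mathbf V]\subset\exp([\fv,\fv])$, and combines this with the lemma to get $[V,V]\subset[\mathbf V,\mathbf V](F)\subset\exp([\fv,\fv])=[V,V]$; it then obtains surjectivity of $V\to(\mathbf V/[\mathbf V,\mathbf V])(F)$ from the vanishing of Galois cohomology of unipotent groups (Serre). You instead get surjectivity from the bijectivity and naturality of $\exp$ together with the identification of the commutative quotient with a vector group, and you compute the kernel as $\exp([\fv,\fv])=[V,V]$ directly from Lemma \ref{lem:exp}. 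Both routes are sound. Yours avoids the Galois-cohomology input entirely, which is a genuine simplification on that side; the price is that you lean on the characteristic-zero dictionary in an unstated way: to know that $(\mathbf V/[\mathbf V,\mathbf V])(F)\cong\fv/[\fv,\fv]$ you need $\operatorname{Lie}([\mathbf V,\mathbf V])=[\fv,\fv]$, and the nontrivial inclusion here ($[\mathbf V,\mathbf V]\subset\exp([\fv,\fv])$, equivalently $\operatorname{Lie}([\mathbf V,\mathbf V])\subset[\fv,\fv]$) is precisely the algebraic-group-level step the paper proves by hand via BCH. If you make that one point explicit (or cite it as the standard fact it is), your argument is complete and self-contained.
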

\begin{proof}
Let $\underline{\fv}$ be $\fv$ considered as an algebraic variety. By (\ref{=BCH},\ref{=cm}), the quotient
$\mathbf{V}/\exp([\underline{\fv},\underline{\fv}])$ is an abelian group. Hence  $[\mathbf{V},\mathbf{V}]\subset \exp([\underline{\fv},\underline{\fv}])$.  Thus, by Lemma \ref{lem:exp} we have
$[V,V]\subset  [\mathbf V, \mathbf V](F)\subset \exp([\fv,\fv])=[V,V].$
 Therefore $[V,V]=  [\mathbf V, \mathbf V](F)$. 
Since unipotent groups have trivial Galois cohomologies (see \cite[\S III.2.1, Proposition 6]{SerGal}), $\mathbf{V}(F)/[\mathbf{V},\mathbf{V}](F)=(\mathbf{V}/[\mathbf{V},\mathbf{V}])(F)$ and the statement follows.
\end{proof}
By this corollary Lemma \ref{lem:char} reduces to the case when $\mathbf{V}$ is commutative. Since any commutative unipotent group over $F$ is a power of $\mathbb G_a$, this case follows from the isomorphism of $F$ to its Pontryagin dual.
\proofend}


\begin{thebibliography}{0000000}
\bibitem[A13]{A} A. Aizenbud, {\it A partial analog of the integrability theorem for distributions on
p-adic spaces and applications}, Israel Journal of Mathematics \textbf{193} n. 1, 233-262 (2013).
\bibitem[AG]{AG} A. Aizenbud, D. Gourevitch, {\it Vanishing of certain equivariant distributions on spherical spaces}, \href{http://arxiv.org/abs/1311.6111}{arXiv:1311.6111}, to appear in Math. Z.
\bibitem[AGS]{AGS} A. Aizenbud, D. Gourevitch, E. Sayag:
{\it $\mathfrak{z}$-finite distributions on $p$-adic groups,}
arXiv:1405.2540.

\bibitem[\href{http://www.math.tau.ac.il/~bernstei/Publication_list/publication_texts/Bernstein-P-invar-SLN.pdf}{Ber83}]{Ber} J.
Bernstein, {\it $P$-invariant Distributions on $\mathrm{GL}(N)$
and the classification of unitary representations of
$\mathrm{GL}(N)$ (non-archimedean case),} Lie group
representations, II (College Park, Md., 1982/1983), 50--102,
Lecture Notes in Math., \textbf{1041}, Springer, Berlin (1984).

\bibitem[BH02]{BH}
C. Bushnell, G. Henniart: {\it
On the derived subgroups of certain unipotent subgroups of reductive groups over infinite fields,}
Transform. Groups \textbf{7}, no. 3, 211-230 (2002).

\bibitem[\href{http://www.math.tau.ac.il/~bernstei/Publication_list/publication_texts/B-Zel-RepsGL-Usp.pdf}{BZ76}]{BZ} J. Bernstein, A.V.
Zelevinsky, {\it Representations of the group $\mathrm{GL}(n, F)$,
where F is a local non-Archimedean field,} Uspekhi Mat. Nauk
\textbf{10}, No.3, 5-70 (1976).

\bibitem[GK75]{GK}
I.~M. Gelfand and D.~A. Kajdan, \emph{Representations of
the group
  {${\rm GL}(n,K)$} where {$K$} is a local field}, Lie groups and their
  representations (Proc. Summer School, Bolyai J\'anos Math. Soc., Budapest,
  1971), Halsted, New York, 1975, pp.~95--118.


\bibitem[\href{http://projecteuclid.org/euclid.pjm/1102707065}{Hef85}]{Hef} D. B. Heifetz, {\it $p$-adic oscillatory integrals and wave front sets,} Pacific J. Math. {\bf 116/2}, 285-305, 1985.


\bibitem[H\"or90]{Hor} L. H\"ormander, {\it The analysis of linear partial differential operators. I. Distribution theory and Fourier analysis.}
Grundlehren der Mathematischen Wissenschaften {\bf 256}. Springer-Verlag, Berlin, 1990.
\bibitem[Hum72]{Hum} J. E. Humphreys, Introduction to Lie Algebras and Representation Theory,
Springer-Verlag, New York, 1972.

\bibitem[LM]{LM} E.Lapid, Z. Mao {\it On an analogue of the Ichino--Ikeda conjecture for Whittaker coefficients on the metaplectic group,}
 arXiv:1404.2905.

\bibitem[Ser64]{Ser}
J.P. Serre: {\it Lie Algebras and Lie Groups}, Lecture Notes in Mathematics, \textbf{1500}, Springer-Verlag,
New York (1964).

\bibitem[Ser97]{SerGal}
J.P. Serre: {\it Galois Cohomology}, Springer-Verlag,
Berlin-Heidelberg (1997).

\end{thebibliography}
\end{document}